\documentclass[a4paper,11pt]{article}

\usepackage{amsmath,amssymb,amsthm,mathrsfs}
\usepackage[utf8]{inputenc}
\usepackage{a4}
\usepackage[english]{babel}
\usepackage{enumerate}
\usepackage{graphicx}
\usepackage{color}
\usepackage[hidelinks,
  bookmarks=true,
  bookmarksopen=true,
  bookmarksnumbered=true,
  pdfauthor={Giuliano Basso},
  pdftitle={Almost minimal orthogonal projections},
  pdfsubject={Paper},
  pdfstartview=FitH,
  colorlinks=false,
  pdfencoding=unicode
]{hyperref}

\sloppy

\numberwithin{equation}{section}


\newtheorem{theorem}{Theorem}[section]
\newtheorem{corollary}[theorem]{Corollary}
\newtheorem{lemma}[theorem]{Lemma}
\newtheorem{proposition}[theorem]{Proposition}

\newtheorem{question}[theorem]{Question}

\theoremstyle{definition}
\newtheorem{definition}{Definition}[section]

\theoremstyle{remark}



\DeclareMathOperator{\R}{\mathbb{R}}

\DeclareMathOperator{\Tr}{Tr}

\DeclareMathOperator{\Sgn}{Sgn}

\newcommand*\norm[1]{\lVert#1\rVert}
\newcommand\abs[1]{\left\lvert#1\right\rvert}


\newcommand{\eval}[2][\right]{\relax
  \ifx#1\right\relax \left.\fi#2#1\rvert}


\let\abs=\envert


\let\norm=\enVert

\title{Almost minimal orthogonal projections}

\author{Giuliano Basso}

\date{\today}

\pagestyle{myheadings}
\markboth{G.~Basso}{Almost minimal orthogonal projections}

\begin{document}


\pdfbookmark[0]{Almost minimal orthogonal projections}{titleLabel}

\maketitle

\begin{abstract}
The projection constant \(\Pi(E):=\Pi(E, \ell_\infty)\) of a finite-dimensional Banach space \(E\subset \ell_\infty\) is by definition the smallest norm of a linear projection of \(\ell_\infty\) onto \(E\). Fix \(n\geq 1\) and denote by \(\Pi_n\) the maximal value of \(\Pi(\cdot)\) amongst \(n\)-dimensional real Banach spaces. We prove for every \(\varepsilon >0\) that there exist an integer \(d\geq 1\) and an \(n\)-dimensional subspace \(E\subset \ell_1^d\) such that \(\Pi_n \leq \Pi(E, \ell_1^d) +2 \varepsilon\) and the orthogonal projection \(P\colon \ell_1^d\to E\) is almost minimal in the sense that \( \norm{P} \leq \Pi(E, \ell_1^d)+\varepsilon\). As a consequence of our main result, we obtain a formula relating \(\Pi_n\)  to smallest absolute value row-sums of orthogonal projection matrices of rank \(n\). 

\end{abstract}

\section{Introduction}

\subsection{Overview} Let \(E\subset F\) denote a finite-dimensional subspace of a real Banach space \(F\). The \textit{projection constant of \(E\) relative to \(F\)}, denoted by \(\Pi(E,F)\), is by definition the minimal norm of a linear projection of \(F\) onto \(E\), and the real number \(\Pi(E):=\Pi(E, \ell_\infty)\) is called \textit{(absolute) projection constant} of \(E\).\footnote{In the literature the commonly used symbols to denote these quantities are \(\lambda(E,F)\) and \(\lambda(E)\). See 'Comments on notation' at the end of the introduction for a short justification for why we deviate from this tradition.} These concepts are essential tools of Banach space theory. The exact values of the projection constants of certain classical spaces have been computed in \cite{MR114110, MR190708, MR412775, MR1695788}, general bounds may be found in \cite{MR3443, MR291770, MR793850, MR968885, MR2725896}, and further results are obtained in \cite{MR146649, MR0169062}. Moreover, projection constants have been used in approximation theory  \cite{MR0265842, MR358188, MR0463776, MR1782730}, and metric geometry \cite{MR167821, MR2277209, MR2288741}.

The starting point of the present article is the following observation: For many classical polyhedral spaces \(E\subset \ell_\infty^d\), the \textit{orthogonal projection} \(P\colon \ell_\infty^d \to E\) given by \(P(x)=\langle x, b_1 \rangle_{\R^d} \,b_1+\dotsm+\langle x, b_n \rangle_{\R^d} \,b_n\), where \(b_1, \ldots, b_n\) is any basis of \(E\) that is orthonormal with respect to the standard inner product \(\langle \cdot, \cdot \rangle_{\R^d}\), is a \textit{minimal projection}, that is, \(\norm{P}=\Pi(E, \ell_\infty^d)=\Pi(E)\). For instance, Chalmers \cite{MR862235} established that the orthogonal projection is minimal if the unit ball of \(E\) is a regular polytope or if the unit ball of \(E\) is the \(A_n\) root polytope. Furthermore,  a classical result of Lozinski states that the orthogonal projection (Fourier projection) \(P\colon C\to P_n\) from the space \(C\) of all continuous and \(2\pi\)-periodic functions \(f\colon [-\pi, \pi] \to \R\) equipped with the sup-norm onto the space \(P_n\) of all trigonometric polynomials of degree \(\leq n\) is minimal, cf. \cite{MR0026699, MR256044}. 

Besides these special results, an important class of examples may be constructed with the help of equiangular lines. Indeed, if \(E\) is a finite-dimensional Banach space such that the lines through opposite vertices of the unit ball of the dual space of \(E\) form a system of equiangular lines of cardinality \(d\), then \(E\subset \ell_\infty^d\) and the orthogonal projection \(\ell_\infty^d\to E\) is minimal. This result is due to K\"onig, Lewis, and Lin \cite{MR722257}. These spaces (if they exist) maximize 
\[\Pi(n, d):=\max\bigl\{ \Pi(E) : \dim(E)=n \textrm{ and } E\subset \ell_\infty^d \bigr\},\]
the maximal relative projection constant of \(n\)-dimensional Banach spaces with unit ball having at most \(2d\) faces. Thus, for example, the \(2\)-dimensional real Banach space \(E_{hex}\) with the regular hexagon as unit ball has projection constant equal to \(\Pi(2,3)\). 

In the 1960s, Gr\"unbaum \cite{MR114110} asked if \(E_{hex}\) has a maximal projection constant amongst \(2\)-dimensional real Banach spaces. In \cite{MR2725896}, Chalmers and Lewicki  resolved Gr\"unbaums conjecture by showing that \(\Pi(E_{hex})=\Pi_2=\frac{4}{3}\), where 
\[\Pi_n:=\max\bigl\{ \Pi(E) : \dim(E)=n \bigr\}\]
 denotes the \textit{maximal projection constant of order \(n\)}. It is possible to show that \(E_{hex}\subset \ell_1^3\) and \(\norm{P}=\Pi(E_{hex}, \ell_1^3)=\Pi_2\), where \(P\colon \ell_1^3\to E_{hex}\) is the orthogonal projection of \(\ell_1^3\) onto \(E_{hex}\). In this paper, we are interested if such a space also exists for \(n>2\), see Question \ref{qe:conjecture}. We proceed with a brief summary of known results for \(n=3\).

The computation of \(\Pi_n\) is an exceedingly difficult task. Only the values \(\Pi_1=1\) by the Hahn-Banach theorem and \(\Pi_2=\frac{4}{3}\) due to Chalmers and Lewicki are known. It is expected that \(\Pi_3=\Pi(E_{dod})=\phi\), where \(\phi\) denotes the golden ratio and \(E_{dod}\) is the \(3\)-dimensional Banach space whose unit ball is a dodecahedron. In \cite{MR2052228},  K\"onig and Tomczak-Jaegermann established that the Banach space \(Y_3\subset \ell_1^6\) with unit ball a icosidodecahedron, a polyhedron with twenty regular triangles and twelve regular pentagons as its faces, has the property that \(\Pi(Y_3, \ell_1^6)=\Pi(E_{dod})=\phi\). It is not hard to check that the orthogonal projection \(P\colon \ell_1^6 \to Y_3\) is minimal, so \(\norm{P}=\Pi(Y_3, \ell_1^6)=\phi\). Thus, if \(\Pi_3=\phi\), then \(\norm{P}=\Pi(Y_3, \ell_1^6)=\Pi_3\). In view of these examples, the following question arises naturally:

\begin{question}\label{qe:conjecture}
Fix an integer \(n\geq 1\). Does there exist an \(n\)-dimensional Banach space \(E\subset \ell_1^d\) for some integer \(d\geq n\) such that the orthogonal projection \(P\colon \ell_1^d \to E\) is minimal and \(\norm{P}=\Pi(E, \ell_1^d)=\Pi_n\)? 
\end{question}

We suspect that Question \ref{qe:conjecture} has a positive answer for \(n=3\). However, for \(n\geq 4\) the general picture is unclear.  It appears reasonable to expect that Question \ref{qe:conjecture} has only a positive answer for certain integers \(n\).

\subsection{Main result} Our main result implies that Question \ref{qe:conjecture} has an `almost' positive answer for every integer \(n\geq 1\). 

\begin{theorem}\label{thm:main1}
Let \(n\geq 1\) be an integer. For every \(\varepsilon >0\) there exist an integer \(d\geq 1\) and an \(n\)-dimensional subspace \(E\subset \ell_1^d\) such that
\[ \Pi_n -\varepsilon\leq \norm{P} \leq \Pi(E, \ell_1^d)+\varepsilon,\]
where \(P\colon \ell_1^d \to E\) denotes the orthogonal projection of \(\ell_1^d\) onto \(E\). 
\end{theorem}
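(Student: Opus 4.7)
The plan proceeds in three stages: first approximate $\Pi_n$ by a subspace of $\ell_\infty^{d_0}$; then transfer to the $\ell_1$ side via duality; and finally --- the main obstacle --- tune the embedding so the orthogonal projection becomes almost minimal.

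For the first stage, I would choose an $n$-dimensional Banach space $Y$ with $\Pi(Y) \geq \Pi_n - \varepsilon/4$, which exists by the definition of $\Pi_n$. A standard $\varepsilon$-net argument on the extreme points of the unit ball of $Y^*$ then produces an integer $d_0$ and an isometric embedding $Y \hookrightarrow \ell_\infty^{d_0}$ with $\Pi(Y, \ell_\infty^{d_0}) \geq \Pi_n - \varepsilon/2$. I would fix a minimal projection $Q \colon \ell_\infty^{d_0} \to Y$ realizing this bound; it takes the form $Q(x) = \sum_{j=1}^{d_0} x_j\, \phi_j$ with $\phi_j \in Y$.

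For the second stage, I would combine the adjoint $Q^*$ with an analogous finite approximation on the dual side to produce an isometric (or, if needed, $(1+\varepsilon)$-isomorphic) embedding $Y^* \hookrightarrow \ell_1^{d_1}$ for some $d_1 \geq d_0$, and denote its image by $E_0$. A duality argument using the Garling--Gordon identity $\Pi(Y)=\Pi(Y^*)$ for finite-dimensional spaces, together with the explicit projection onto $E_0$ built from $Q^*$, shows that $\Pi(E_0, \ell_1^{d_1})$ lies in the range $[\Pi_n - 3\varepsilon/4,\ \|Q\|]$.

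The third stage is the main obstacle. The orthogonal projection onto $E_0$ in $\ell_1^{d_1}$ is typically not a minimal projection. To remedy this I would enlarge the ambient space by splitting each coordinate $j \in \{1, \ldots, d_1\}$ of $\ell_1^{d_1}$ into $N_j$ copies carrying positive weights $w_{j,k}$ summing to one. The resulting isometric embedding $\iota \colon \ell_1^{d_1} \hookrightarrow \ell_1^{d}$ (with $d = \sum_j N_j$) carries $E_0$ to an isometric copy $E := \iota(E_0)$ and preserves the relative projection constant: $\Pi(E, \ell_1^d) = \Pi(E_0, \ell_1^{d_1})$. Crucially, the orthogonal projection onto $E$ in $\ell_1^d$ corresponds, via this splitting, to a diagonally weighted orthogonal projection onto $E_0$ in $\ell_1^{d_1}$, with effective diagonal weights $W_j = \sum_k w_{j,k}^2$; varying the splitting exhausts a dense family of such weightings. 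The heart of the argument is to show --- via a Lewis-type extremal argument combined with the Chalmers--Metcalf characterization of minimal projections --- that the weights can be selected so the induced weighted-orthogonal projection has norm within $\varepsilon/4$ of $\Pi(E_0, \ell_1^{d_1})$. Assembling the three stages then yields $\|P\| \leq \Pi(E, \ell_1^d) + \varepsilon/4$ and $\|P\| \geq \Pi(E, \ell_1^d) \geq \Pi_n - 3\varepsilon/4$, proving the theorem after a harmless relabelling of $\varepsilon$. The Lewis-type weight-optimization step in the third stage is the principal difficulty; the first two stages should follow from standard techniques in the theory of projection constants.
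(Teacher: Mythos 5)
Your Stage 2 contains a fatal error, and Stage 3 leaves the entire content of the theorem unproved. The identity \(\Pi(Y)=\Pi(Y^*)\) is false: \(\Pi(\ell_\infty^3)=1\) because \(\ell_\infty^3\) is a \(P_1\)-space, while \(\Pi(\ell_1^3)>1\) since \(\ell_1^3\) is not isometric to \(\ell_\infty^3\). (What is true is \(\gamma_\infty(\mathrm{id}_E)=\gamma_1(\mathrm{id}_{E^*})\), and \(\gamma_1\neq\gamma_\infty\).) Moreover, a finite-dimensional space \(Y^*\) admits an isometric embedding into some \(\ell_1^{d_1}\) only if the unit ball of \(Y\) is a zonotope (a zonoid, for \((1+\varepsilon)\)-embeddings), and nothing guarantees this for a near-maximizer \(Y\) of \(\Pi(\cdot)\); the adjoint \(Q^*\) of a minimal projection only yields a \(\norm{Q}\)-isomorphic copy of \(Y^*\), and \(\norm{Q}\approx\Pi_n\) is far too lossy a distortion. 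Consequently your construction produces no subspace \(E_0\subset\ell_1^{d_1}\) with \(\Pi(E_0,\ell_1^{d_1})\geq\Pi_n-3\varepsilon/4\), and the final chain \(\norm{P}\geq\Pi(E,\ell_1^d)\geq\Pi_n-3\varepsilon/4\) collapses. The paper obtains such subspaces by a completely different mechanism: Theorem~\ref{thm:maximizer} supplies a rank-\(n\) orthogonal projection matrix \(P_0\) with \(\abs{P_0}\) positive and \(\rho(\abs{P_0})=\Pi_n\), and Lemma~\ref{lem:RelProj} gives the lower bound \(\Tr(\tfrac{1}{d}\Sgn(P)P)\leq\Pi(E,\ell_1^d)\) for \(E=P(\ell_1^d)\) once \(\Sgn(P)\) and \(P\) commute.

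Your Stage 3 intuition --- split coordinates with weights to homogenize the projection --- is genuinely the same device as the paper's blow-up construction, but the claim that the weights ``can be selected so the induced weighted-orthogonal projection has norm within \(\varepsilon/4\) of \(\Pi(E_0,\ell_1^{d_1})\)'' is exactly the theorem, and neither Lewis's lemma nor the Chalmers--Metcalf criterion is shown (or known) to deliver it for a generic \(E_0\). The paper's argument is specific to the matrices coming from Theorem~\ref{thm:maximizer}: one blows up the sign pattern \(\Sgn(P_0)\) with multiplicities \((p_1,\ldots,p_m)\) obtained from a Dirichlet simultaneous rational approximation of the Perron eigenvector of \(\abs{P_0}\); the equality case of Ky Fan's inequality (Lemmas~\ref{lem:EqCase} and~\ref{lem:ComMax}) then produces a new \(P\in\mathcal{P}_{n,d}\) commuting with \(\Sgn(P)\), with \(\abs{P}\) positive and \(j^t\abs{P}j\) within \(\varepsilon\) of \(d\,\rho(\abs{P})\); finally the spectral-gap estimate of Lemma~\ref{lem:upperC} converts this into the statement that the Perron eigenvector is nearly the all-ones vector, hence all row sums of \(\abs{P}\) agree up to \(\varepsilon\), whence \(\norm{P}\leq\tfrac{1}{d}\Tr(\Sgn(P)P)+\varepsilon\leq\Pi(E,\ell_1^d)+\varepsilon\). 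The attainment statement in Theorem~\ref{thm:maximizer}, the commutation coming from the Ky Fan equality case, and the spectral-gap control are each essential and are absent from your sketch.
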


The orthogonal projection \(P\) from the theorem above is `almost minimal' in the sense that \(0\leq \norm{P}-\Pi(E, \ell_1^d)\leq \varepsilon\). Moreover, our proof of Theorem \ref{thm:main1} shows
\[\max_{i=1, \ldots, d} \, \sum_{j=1}^d \abs{p_{ij}}\leq \min_{i=1, \ldots, d} \, \sum_{j=1}^d \abs{p_{ij}} +\varepsilon;\]
hence, the matrix \(\tfrac{1}{\norm{P}}\abs{P}\) is nearly doubly-stochastic. We use \(\abs{P}\) as shorthand notation for the matrix that is obtained from \(P\) by taking the absolute value of every entry of \(P\). The proof of Theorem \ref{thm:main1} is given in Section \ref{sec:AOP}. It heavily relies on tools from matrix analysis such as the Perron-Frobenius Theorem and a classical result of Fan \cite[Theorem 1]{MR34519}. A key step will be to make the approximation of \(\Pi_n\) from \cite{MR4001080} quantitative by the use of Dirichlet's Theorem on simultaneous approximation \cite[Theorem 1A p. 27]{MR568710}. 

By looking at the proof of Theorem \ref{thm:main1}, it is readily verified that Theorem \ref{thm:main1} remains valid if \(\ell_1^d\) is replaced by the overspace \(\ell_\infty^d\). Hence, the following corollary is a direct consequence of the modified version of Theorem \ref{thm:main1}.

\begin{corollary}\label{cor:main1}
For every integer \(n\geq 1\) there exists an \(n\)-dimensional Banach space \(E\) such that \(\Pi(E)=\Pi_n\) and \(\Pi(E)\) is the infimum of \(\norm{T_2}\, \norm{T_1}\) taken over all factorizations \(\textrm{\normalfont id}_E=T_2 T_1\), where \(T_1\colon E \to \ell_\infty^d \) and \(T_2\colon \ell_\infty^d \to E\) are linear maps such that \(T_1 T_2\) is the orthogonal projection of \(\ell_\infty^d\) onto \(T_1(E)\) and \(d\) is any integer. 
\end{corollary}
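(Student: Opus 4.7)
The plan is to combine the modified version of Theorem~\ref{thm:main1} (with $\ell_\infty^d$ replacing $\ell_1^d$) with compactness of the $n$-dimensional Banach--Mazur compactum and the standard lower bound $\Pi(E)\leq \norm{T_1}\norm{T_2}$ valid for every factorization of $\mathrm{id}_E$ through $\ell_\infty^d$. First, applying the modified Theorem~\ref{thm:main1} with $\varepsilon=1/k$ produces $n$-dimensional subspaces $E_k\subset \ell_\infty^{d_k}$ whose orthogonal projections $P_k$ satisfy $\norm{P_k}\to \Pi_n$ and $\norm{P_k}-\Pi(E_k,\ell_\infty^{d_k})\to 0$. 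Since $\ell_\infty^{d_k}$ is $1$-complemented in $\ell_\infty$, one has $\Pi(E_k,\ell_\infty^{d_k})\leq \Pi(E_k)\leq \Pi_n$, whence $\Pi(E_k)\to \Pi_n$. Passing to a Banach--Mazur convergent subsequence, let $E$ be the limit. Transferring any factorization of $\mathrm{id}_{E'}$ through $\ell_\infty^d$ via an isomorphism $\phi\colon E'\to E$ yields $\Pi(E)\leq \norm{\phi}\norm{\phi^{-1}}\Pi(E')$, so $\Pi$ is continuous on the Banach--Mazur compactum; in particular $\Pi(E)=\Pi_n$, and this $E$ is the space claimed by the corollary.

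To verify the factorization identity, I would choose along the convergent subsequence isomorphisms $\phi_k\colon E\to E_k$ with $\norm{\phi_k}\norm{\phi_k^{-1}}\to 1$. Letting $\iota_k\colon E_k\hookrightarrow \ell_\infty^{d_k}$ denote the inclusion, define $\widetilde T_1^k:=\iota_k\phi_k$ and $\widetilde T_2^k:=\phi_k^{-1}P_k$. A direct check gives $\widetilde T_2^k\widetilde T_1^k=\mathrm{id}_E$ and $\widetilde T_1^k\widetilde T_2^k=P_k$, which is the orthogonal projection of $\ell_\infty^{d_k}$ onto $\widetilde T_1^k(E)=E_k$; moreover $\norm{\widetilde T_1^k}\norm{\widetilde T_2^k}\leq \norm{\phi_k}\norm{\phi_k^{-1}}\norm{P_k}\to \Pi_n$, so the restricted infimum appearing in the corollary is at most $\Pi(E)$. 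The matching lower bound is the standard factorization estimate: for any factorization $\mathrm{id}_E=T_2T_1$ with $T_1\colon E\to \ell_\infty^d$ normalized so that $\norm{T_1}=1$, coupling $T_1$ with any isometric embedding $j\colon E\to \ell_\infty^D$ gives an isometric embedding $T_1\oplus j\colon E\to \ell_\infty^{d+D}$, and extending $T_2$ by zero on the last $D$ coordinates produces a projection of norm $\norm{T_2}$ onto its image, so $\Pi(E)\leq \norm{T_1}\norm{T_2}$.

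The main technical obstacle is the compactness-and-continuity step extracting a single limit space $E$ from the sequence of near-extremal spaces supplied by the modified Theorem~\ref{thm:main1}; once that limit is in hand, the remainder reduces to a routine Banach--Mazur transfer of factorizations together with the classical factorization lower bound for $\Pi$.
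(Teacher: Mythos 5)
Your proof is correct and is essentially the argument the paper leaves implicit when it declares the corollary a ``direct consequence'' of the modified Theorem~\ref{thm:main1}: extract a Banach--Mazur limit \(E\) of the near-extremal spaces \(E_k\subset\ell_\infty^{d_k}\), use the Lipschitz continuity of \(\Pi(\cdot)\) on the Banach--Mazur compactum to conclude \(\Pi(E)=\Pi_n\), transfer the factorizations \(\mathrm{id}_{E_k}=P_k\circ\iota_k\) along almost-isometries \(\phi_k\) (your verification that \(\widetilde T_1^k\widetilde T_2^k=P_k\) is the orthogonal projection onto \(\widetilde T_1^k(E)=E_k\) is exactly the point), and close with the classical bound \(\Pi(E)\leq\norm{T_1}\,\norm{T_2}\), which the paper itself records with a reference. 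The only detail worth smoothing is the last step of your lower-bound sketch, where passing from a projection of \(\ell_\infty^{d+D}\) onto an isometric copy of \(E\) to the bound on the absolute constant \(\Pi(E)\) uses the injectivity of \(\ell_\infty^{d+D}\); this is standard and covered by the cited factorization characterization.
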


It is well-known that for every finite-dimensional Banach space \(E\) the projection constant \(\Pi(E)\) is 
the infimum of \(\norm{T_2}\, \norm{T_1}\) taken over all factorizations \(\textrm{\normalfont id}_E=T_2 T_1\), where \(T_1\colon E \to \ell_\infty^d \) and \(T_2\colon \ell_\infty^d \to E\) are linear maps and \(d\) is any integer, cf. \cite[Paragraph 4.11]{MR902804}. We hope that Corollary \ref{cor:main1} will be useful for computing exact values of \(\Pi_n\) for \(n\geq 3\). 

\subsection{A formula for \(\Pi_n\)} The subsequent characterization of the maximal projection constant \(\Pi_n\) is a key component of the proof of Theorem \ref{thm:main1}. 

\begin{theorem}\label{thm:maximizer}
Fix an integer \(n\geq 1\). We define the following:
\begin{enumerate}
\item Let \(\mathcal{S}_d\) denote the set of all \(d\times d\) symmetric \((-1,1)\)-matrices that have only ones on the diagonal and set \(\mathcal{S}:=\bigcup_{d\geq 1} \mathcal{S}_d\). We write
\[A:=\sup \Bigl\{ \frac{1}{d}\sum_{i=1}^n \lambda_i(S)\, :\,\lambda_1(S)\geq \dots \geq \lambda_d(S) \textrm{ eigenvalues of } S \in \mathcal{S}\Bigr\}. \]
\item  Let \(\mathcal{P}_{n,d}\) denote the set of all \(d\times d\) orthogonal projection matrices of rank \(n\). We abbreviate \(\mathcal{P}_n:=\bigcup_{d\geq n} \mathcal{P}_{n,d}\) and set
\[B:=\sup \bigl\{ \rho(\abs{P}) : P\in \mathcal{P}_n \textrm{ and } \abs{P} \textrm{ is positive} \bigr\}. \]
\end{enumerate}
Then \(A=B=\Pi_n.\) Moreover, there exists a matrix \(P\in \mathcal{P}_{n}\) such that \(\abs{P}\) is a positive matrix and \(\rho(\abs{P})=B\).  
\end{theorem}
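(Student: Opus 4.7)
The equality $A=\Pi_n$ is essentially the characterization of the maximal projection constant obtained in \cite{MR4001080}. Indeed, Ky Fan's maximum principle gives $\sum_{i=1}^n \lambda_i(S) = \max_{Q\in\mathcal{P}_{n,d}}\operatorname{Tr}(SQ)$, and optimizing over the off-diagonal $\pm 1$ entries of $S$ (using that $Q_{ii}\geq 0$) shows
\[A = \sup_{d\geq n}\,\sup_{Q\in\mathcal{P}_{n,d}}\,\frac{1}{d}\sum_{i,j}\abs{Q_{ij}},\]
which is precisely the formula for $\Pi_n$ established in \cite{MR4001080}. The main task is therefore to prove $A=B$ together with the existence of an extremizer.

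For the easier direction $A\leq B$, fix $S\in\mathcal{S}_d$ and let $Q\in\mathcal{P}_{n,d}$ be the projection onto the sum of the top $n$ eigenspaces of $S$, so that $\operatorname{Tr}(SQ)=\sum_{i=1}^n\lambda_i(S)$. Since $S_{ij}\in\{-1,+1\}$ we have $\operatorname{Tr}(SQ)\leq \mathbf{1}^T\abs{Q}\mathbf{1}$, and testing the Rayleigh quotient of the symmetric nonnegative matrix $\abs{Q}$ against $\mathbf{1}/\sqrt{d}$ yields $d^{-1}\mathbf{1}^T\abs{Q}\mathbf{1}\leq \rho(\abs{Q})$. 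A generic arbitrarily small perturbation inside $\mathcal{P}_{n,d}$ makes $\abs{Q}$ strictly positive while moving $\rho(\abs{Q})$ by at most any prescribed amount, so $\frac{1}{d}\sum_{i=1}^n\lambda_i(S)\leq B$, and taking the supremum over $S$ gives $A\leq B$.

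For the harder direction $B\leq A$, let $Q\in\mathcal{P}_{n,d}$ with $\abs{Q}$ positive, let $v>0$ be the Perron eigenvector with $\abs{Q}v=\rho v$ for $\rho:=\rho(\abs{Q})$, and set $u:=v/\norm{v}$, $S_0:=\operatorname{sign}(Q)\in\mathcal{S}_d$, $M:=\operatorname{diag}(u)\,S_0\,\operatorname{diag}(u)$. Then
\[\rho \;=\; u^T\abs{Q}u \;=\; \operatorname{Tr}(MQ) \;\leq\; \sum_{i=1}^n\lambda_i(M)\]
by Fan's trace inequality. I would now approximate $M$ by a rescaling of a sign matrix of large size: using Dirichlet's simultaneous approximation theorem (as the author does in Section~\ref{sec:AOP}) pick, for each large $N$, integers $k_1^{(N)},\dotsc,k_d^{(N)}$ with $\sum_i k_i^{(N)}=N$ and $k_i^{(N)}/N\to u_i^2$, and form the blow-up $\tilde S_N\in\mathcal{S}_N$ whose $(i,\alpha),(j,\beta)$ entry equals $(S_0)_{ij}$. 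A direct calculation shows that the nonzero eigenvalues of $\tilde S_N$ coincide with those of $K_N S_0 K_N$ where $K_N=\operatorname{diag}(\sqrt{k_i^{(N)}})$, and that after dividing by $N$ they converge to the eigenvalues of $M$. Consequently
\[A \;\geq\; \frac{1}{N}\sum_{i=1}^n\lambda_i(\tilde S_N) \;\xrightarrow[N\to\infty]{}\; \sum_{i=1}^n\lambda_i(M) \;\geq\; \rho,\]
and taking the supremum in $Q$ yields $A\geq B$.

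Finally, for the existence of a maximizer, I would take a maximizing sequence $Q_k\in\mathcal{P}_{n,d_k}$ with $\rho(\abs{Q_k})\to B$ and, by a reduction that collapses repeated rows and eliminates negligible coordinates (in the spirit of the equiangular-line construction of K\"onig--Lewis--Lin \cite{MR722257}), reduce to the case where $d_k$ is bounded in terms of $n$ alone. Compactness then produces a limit $P^\ast\in\mathcal{P}_{n,D}$ with $\rho(\abs{P^\ast})=B$, and a small perturbation of $P^\ast$ along a tangent direction to $\mathcal{P}_{n,D}$ chosen to be compatible with the Perron eigenvector upgrades $\abs{P^\ast}$ to a strictly positive matrix without decreasing the spectral radius. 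The main obstacles I expect are (i) controlling the eigenvalue convergence in the blow-up argument for $B\leq A$ quantitatively enough to absorb the Dirichlet approximation error, and (ii) carrying out the dimension-reduction and positivity perturbation needed for attainment; by contrast, the inequalities $A\leq B$ and the reduction $A=\Pi_n$ are routine once the Ky Fan/Rayleigh quotient mechanism is in place.
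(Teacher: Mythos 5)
Your treatment of the identity \(A=B=\Pi_n\) is essentially sound, though routed differently from the paper. For \(A\leq B\) you perturb a generic top-eigenspace projection to make \(\abs{Q}\) positive; this works (positivity of all entries is an open dense condition on the Grassmannian and \(\rho(\abs{\cdot})\) is continuous), whereas the paper avoids any perturbation by taking \(S\) to be a maximizer of \(\pi_n\) over \(\mathcal{S}_d\), for which Lemma \ref{lem:ComMax} guarantees that the optimal \(P\) satisfies \(\Sgn(P)=S\) and \(\abs{P}>0\) automatically. For \(B\leq A\) you run the blow-up/Dirichlet approximation directly on \(M=\mathrm{diag}(u)\,\Sgn(Q)\,\mathrm{diag}(u)\); this is exactly the mechanism of the paper's Proposition \ref{prop:approxbyD} and is correct (and in fact more self-contained), while the paper instead factors through \(B\leq\Pi_n\) via the Chalmers--Lewicki formula \eqref{eq:ChalmersLewicki} and \(\Pi_n\leq A\) via \cite[Theorem 1.2]{MR4001080}. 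Your identification of \(A\) with \(\sup_{d}\sup_{Q}\frac1d\sum_{i,j}\abs{Q_{ij}}\) is fine, and relies on the same external input from \cite{MR4001080} that the paper uses.

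The genuine gap is in the ``moreover'' part. First, the reduction of a maximizing sequence to bounded dimension is not something you can get by informally ``collapsing repeated rows'': the fact that \(\Pi_n=\Pi(n,d)\) for some finite \(d\) is precisely \cite[Theorem 1.4]{MR4001080}, which the paper imports as Lemma \ref{lem:MaxPoly}; without citing or reproving it, your compactness step has no starting point. Second, and more seriously, the final step --- ``a small perturbation of \(P^\ast\) \ldots upgrades \(\abs{P^\ast}\) to a strictly positive matrix without decreasing the spectral radius'' --- cannot work as stated. Since \(B\) is the supremum of \(\rho(\abs{\cdot})\) over the admissible class, any perturbation of a maximizer can only keep \(\rho(\abs{P})\) the same or decrease it, and a generic perturbation that turns zero entries into nonzero ones will in general strictly decrease it; you would need the perturbed value to equal \(B\) exactly, which is precisely what has to be proved. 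The correct mechanism is a rigidity statement, not a perturbation: the paper's Lemma \ref{lem:ComMax} shows, via the equality case of Fan's inequality (Lemma \ref{lem:EqCase}), that for \(d>n\) any projection \(P\) achieving \(\pi_n(\sqrt{D}S\sqrt{D})=\Tr(\sqrt{D}S\sqrt{D}\,P)\) at a maximizing \(S\) automatically has \(\abs{P}\) positive, and then \(\Pi_n=\Tr(\abs{P}ww^t)\leq\rho(\abs{P})\) with \(w=(\sqrt{d_1},\ldots,\sqrt{d_d})\) closes the argument. You need some substitute for this lemma; as written, the attainment claim is unproved.
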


We use \(\rho(\abs{P})\) to denote the spectral radius of the matrix \(\abs{P}\).  
The idea to express \(\Pi_n\) via the maximal sum of \(n\) eigenvalues of matrices taken from a certain class is due to Chalmers and Lewicki \cite{MR2527029}. A characterization of \(\Pi_n\) in terms of eigenvalues of certain two-graphs has been obtained in \cite{MR4001080}. The proof of Theorem \ref{thm:maximizer} is given in Section \ref{seq:four}. We proceed with two straightforward consequences of Theorem \ref{thm:maximizer}. 

If \(P\in \mathcal{P}_{n}\) is a \(d\times d\) matrix such that \(\abs{P}\) is positive, then by the Perron-Frobenius theorem, see for example \cite[Theorem 8.1.26]{MR2978290}, and Theorem \ref{thm:maximizer},
\begin{equation*}\label{eq:Perron}
r(\abs{P}):=\min_{i=1, \ldots, d} \, \sum_{j=1}^d \abs{p_{ij}} \leq \rho(\abs{P}) \leq \Pi_n.
\end{equation*}
As it turns out, \(\Pi_n\) is equal to the supremum taken over all lower bounds \(r(\abs{P})\) of \(\Pi_n\). 

\begin{corollary}\label{cor:lower}
Let \(n\geq 1\) be an integer. Then
\begin{equation*}\label{eq:lowerPerron}
\Pi_n=\sup\big\{ r(\abs{P}) : P\in \mathcal{P}_n \textrm{ and } \abs{P} \textrm{ is positive} \big\}.
\end{equation*}
\end{corollary}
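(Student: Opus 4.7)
The plan is to prove the two inequalities separately. For the upper bound \(\sup r(\abs{P}) \leq \Pi_n\), fix any admissible \(P \in \mathcal{P}_n\) with \(\abs{P}\) positive. The Perron--Frobenius theorem sandwiches \(\rho(\abs{P})\) between the smallest and largest absolute row-sums of \(\abs{P}\), so in particular \(r(\abs{P}) \leq \rho(\abs{P})\). Theorem \ref{thm:maximizer} then immediately yields \(\rho(\abs{P}) \leq B = \Pi_n\), and taking the supremum completes this direction.

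For the lower bound I would fix \(\varepsilon > 0\) and invoke Theorem \ref{thm:main1} together with the quantitative statement recorded just below it. Let \(P\colon \ell_1^d \to E\) be the orthogonal projection supplied by that theorem. Two properties will be used: first, \(\norm{P} \geq \Pi_n - \varepsilon\); second, the near-doubly-stochastic bound
\[
\max_{i} \sum_{j=1}^d \abs{p_{ij}} \leq \min_{i} \sum_{j=1}^d \abs{p_{ij}} + \varepsilon.
\]
Because \(P\) is symmetric as an orthogonal projection matrix, its operator norm on \(\ell_1^d\) equals the maximum column \(\ell_1\)-norm \(\max_j \sum_i \abs{p_{ij}}\), which by symmetry equals \(\max_i \sum_j \abs{p_{ij}}\). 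Chaining these estimates gives
\[
r(\abs{P}) \;\geq\; \max_{i} \sum_{j=1}^d \abs{p_{ij}} - \varepsilon \;=\; \norm{P} - \varepsilon \;\geq\; \Pi_n - 2\varepsilon.
\]

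The main obstacle is the positivity requirement: the supremum is restricted to \(P\) whose entries are all strictly nonzero. If the projection produced by Theorem \ref{thm:main1} has any vanishing entry, I would perturb \(E\) slightly within the Grassmannian of \(n\)-planes in \(\R^d\). The associated orthogonal projection matrix depends continuously on the subspace, so a generic small perturbation moves every zero entry off zero while changing each row-sum of \(\abs{P}\) by an arbitrarily small amount. Choosing the perturbation small compared with \(\varepsilon\) preserves the estimate in the form \(r(\abs{P}) \geq \Pi_n - 3\varepsilon\) while guaranteeing that \(\abs{P}\) is positive. Letting \(\varepsilon \to 0\) shows the supremum attains \(\Pi_n\). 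The substantive work is really concentrated in Theorems \ref{thm:main1} and \ref{thm:maximizer}; this corollary is essentially a repackaging of them through Perron--Frobenius.
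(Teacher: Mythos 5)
Your argument is correct and follows essentially the same route as the paper: the upper bound is Perron--Frobenius plus \(B=\Pi_n\) from Theorem \ref{thm:maximizer}, and the lower bound combines the near-doubly-stochastic estimate from the proof of Theorem \ref{thm:main1} with the identification of \(\norm{P}\) (as an operator on \(\ell_1^d\)) with the largest row-sum of \(\abs{P}\). The one point where you diverge is the positivity of \(\abs{P}\): the paper does not need your perturbation step, because the matrix \(P\) produced in the proof of Theorem \ref{thm:main1} comes from Proposition \ref{prop:approxbyD} and is already guaranteed there to have \(\abs{P}\) positive (indeed \(\Sgn(P)=S_\star\in\mathcal{S}_d\)); citing the proof rather than only the statement of Theorem \ref{thm:main1} makes the extra step unnecessary. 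Your Grassmannian perturbation is salvageable but, as written, informal: to claim that a \emph{generic} small perturbation kills all zero entries you should note that each entry \(p_{ij}\) is a real-algebraic function on the (connected) Grassmannian that does not vanish identically, so its zero set is a proper subvariety with empty interior, and the finite union of these sets can be avoided by arbitrarily small perturbations while \(r(\abs{P})\) changes continuously. With that justification (or, more simply, by invoking the positivity already built into the construction), the proof is complete.
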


Corollary \ref{cor:lower} is a direct consequence of our main result and its proof may be found at the end of Section \ref{sec:AOP}. Matrices \(P\in \mathcal{P}_n\) that attain the supremum in Corollary \ref{cor:lower} are of particular interest. In fact, it is not hard to check that if such a matrix \(P\) exists, then \(\norm{P}=\Pi(E, \ell_1^d)=\Pi_n\), where \(E:=P(\ell_1^d)\subset \ell_1^d\). 
In a similar spirit, if the supremum \(A\) from Theorem \ref{thm:maximizer} is attained, then Question \ref{qe:conjecture} also has a positive answer. 
This is the content of the proposition below:

\begin{proposition}\label{prop:SupA}
Fix integers \(d\geq n\geq 1\). The following statements are equivalent:
\begin{enumerate}
\item The supremum \(A\) defined in Theorem \ref{thm:maximizer} is attained by some \(d\times d\) matrix \(S\in \mathcal{S}\). 
\item There exists an \(n\)-dimensional subspace \(E\subset \ell_\infty^d\) such that the orthogonal projection \(P\colon \ell_\infty^d \to E\) is minimal and \(\norm{P}=\Pi(E)=\Pi_n\).
\item There exists an \(n\)-dimensional subspace \(E\subset \ell_1^d\) such that the orthogonal projection \(P\colon \ell_1^d \to E\) is minimal and \(\norm{P}=\Pi(E, \ell_1^d)=\Pi_n\).
\end{enumerate}
\end{proposition}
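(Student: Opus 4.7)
The plan is to prove $(1)\Leftrightarrow(3)$ directly; the equivalence $(1)\Leftrightarrow(2)$ follows by an analogous argument, using that $P$ is symmetric so $\|P\|_{\ell_1^d \to \ell_1^d} = \|P\|_{\ell_\infty^d \to \ell_\infty^d}$, and that the chain $\Pi(E,\ell_\infty^d) \leq \Pi(E) \leq \Pi_n$ automatically promotes any identity $\Pi(E,\ell_\infty^d) = \Pi_n$ to $\Pi(E) = \Pi_n$.

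For $(1)\Rightarrow(3)$, let $S \in \mathcal{S}_d$ attain $A$ and let $P$ denote the orthogonal projection of $\R^d$ onto the span $E$ of the top $n$ eigenvectors of $S$; view $E \subset \ell_1^d$. Since $SE \subset E$ and $S$ is symmetric, $S$ is block-diagonal in the basis adapted to $E \oplus E^\perp$, and a short computation yields $\mathrm{tr}\bigl(S(Q-P)\bigr) = 0$ for every projection $Q\colon \ell_1^d \to E$, so the key identity $\mathrm{tr}(SQ) = \mathrm{tr}(SP) = \sum_{i=1}^n \lambda_i(S) = d\Pi_n$ holds for every such $Q$. The chain
\[ d\Pi_n = \mathrm{tr}(SQ) \leq \sum_{j,k}|Q_{jk}| \leq d\|Q\|_{\ell_1 \to \ell_1}\]
then gives $\Pi(E,\ell_1^d) \geq \Pi_n$. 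Setting $Q = P$ and combining with Fan's inequality for $S' := \mathrm{sign}(P) \in \mathcal{S}_d$---which gives $\sum |P_{jk}| = \mathrm{tr}(S'P) \leq \sum\lambda_i(S') \leq d\Pi_n$---forces $\sum |P_{jk}| = d\Pi_n$, so the \emph{average} row sum of $|P|$ equals $\Pi_n$. The crux is promoting this to: \emph{every} row sum of $|P|$ equals $\Pi_n$, equivalently $\|P\|_{\ell_1 \to \ell_1} \leq \Pi_n$. For this I would combine Theorem~\ref{thm:maximizer}(2), which yields $\rho(|P|) \leq \Pi_n$ (on the dense subset with $|P|$ positive, and on all of $\mathcal{P}_n$ by continuity of the spectral radius), with the Perron--Frobenius theorem and the commutation relation $[S,P] = 0$ forced by the block-diagonal structure, to conclude that $\mathbf{1}_d$ is a Perron eigenvector of $|P|$, i.e.\ $|P|\mathbf{1}_d = \Pi_n \mathbf{1}_d$. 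Together with $\Pi(E,\ell_1^d) \geq \Pi_n$ and the trivial bound $\Pi(E,\ell_1^d) \leq \|P\|$, this yields $\|P\| = \Pi(E,\ell_1^d) = \Pi_n$.

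For $(3)\Rightarrow(1)$, given $P$ as in (3) set $S := \mathrm{sign}(P) \in \mathcal{S}_d$. Minimality of $P$ together with its symmetry---via a Kolmogorov/equioscillation perturbation argument inside the affine space of projections onto $E$---forces every row sum of $|P|$ to equal $\|P\| = \Pi_n$, so $\sum |P_{jk}| = d\Pi_n$. Fan's inequality $\sum\lambda_i(S) \geq \mathrm{tr}(SP) = d\Pi_n$ combined with the definitional upper bound $\sum\lambda_i(S) \leq dA = d\Pi_n$ then shows that $S$ attains $A$. The principal obstacle throughout is this ``average-to-maximum'' step---that the row-sum vector of $|P|$ is constant under the extremality hypotheses---which in the forward direction uses Theorem~\ref{thm:maximizer}(2) and Perron--Frobenius, and in the reverse direction uses careful minimality/perturbation analysis.
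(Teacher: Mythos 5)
Your $(1)\Rightarrow(3)$ (and its $\ell_\infty$ variant for $(1)\Rightarrow(2)$) is essentially the paper's argument: commutation of $S$ with $P$ gives $\Tr(SQ)=\Tr(SP)=d\Pi_n$ for every projection $Q$ onto $E$, whence $\Pi(E,\ell_1^d)\geq\Pi_n$, and the Rayleigh--quotient equality $\tfrac1d\, j^t\abs{P}j=\Pi_n=\rho(\abs{P})$ forces $\abs{P}j=\Pi_n j$, so all row sums equal $\Pi_n$ and $\norm{P}=\Pi_n$. (You can skip the density/continuity detour: since $S$ maximizes $\pi_n$ over $\mathcal{S}_d$, Lemma \ref{lem:ComMax} already gives that $\abs{P}$ is positive, and $\rho(\abs{P})\leq\Pi_n$ follows from Theorem \ref{thm:maximizer}.)

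The genuine gap is in $(3)\Rightarrow(1)$. Your pivotal claim --- that minimality plus symmetry forces \emph{every} row sum of $\abs{P}$ to equal $\norm{P}=\Pi_n$, hence $\sum_{j,k}\abs{p_{jk}}=d\Pi_n$ --- is false. Take any $E\subset\ell_1^m$ satisfying (3) and embed $\ell_1^m$ into $\ell_1^d$ with $d>m$ as the first $m$ coordinates; the orthogonal projection of $\ell_1^d$ onto $E$ is $P\oplus 0$, still minimal of norm $\Pi_n$ (every projection $\ell_1^d\to E$ restricts to one on $\ell_1^m$ of no larger norm), so (3) holds for $d$, yet $d-m$ row sums vanish. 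Then $\Sgn(P)$ has zero entries, so it does not lie in $\mathcal{S}_d$, and $\Tr(\Sgn(P)P)=m\Pi_n<d\Pi_n$, so your Fan sandwich does not close. The ``perturbation argument'' you invoke, made precise, is the dual extremality condition of Lemma \ref{lem:RelProj}: there is $A_0=DS$ with $\nu_1(A_0)=1$, $A_0P=PA_0P$ and $\Tr(A_0P)=\norm{P}$, where $D$ may have zero diagonal entries. What this actually yields is (i) that $DSP$ has exactly $n$ nonzero eigenvalues, forcing $P$ to be supported on the support $\{1,\dots,m\}$ of $D$, and (ii) that the chain $\norm{P}=\sum_i d_i\bigl(\textrm{column sum}_i\bigr)\leq\max_i\bigl(\textrm{column sum}_i\bigr)=\norm{P}$ equalizes only the column sums over indices $i$ with $d_i>0$. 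The correct conclusion is that $A$ is attained by the $m\times m$ matrix obtained from $\Sgn(P)$ by restricting to those coordinates and replacing any remaining zero entries by $1$'s; this reduction is missing from your write-up. The same issue affects $(2)\Rightarrow(1)$, which you dismiss as analogous: there the extremal functional has the form $SD$ with $S$ not symmetric, and one needs an additional symmetrization step ($\pi_n(SD)\leq\pi_n(\sqrt{D}\,\tfrac12(S+S^t)\sqrt{D})$, via Fan) together with Lemma \ref{lem:ComMax} before the submatrix argument applies.
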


The implication \((1)\Longrightarrow (2)\) already appears in \cite[Proposition 13]{MR3566474}. 
If the Banach space \(E\) of Corollary \ref{cor:main1} is polyhedral, that is \(E\subset \ell_\infty^d\) for some integer \(d\), then it is plausible that \(\norm{P}=\Pi(E)=\Pi_n\), where \(P\colon \ell_\infty^d\to E\) denotes the orthogonal projection of \(\ell_\infty^d\) onto \(E\). Hence, in this case Proposition \ref{prop:SupA} would imply that Question \ref{qe:conjecture} has a positive answer. We do not know if \(\Pi(\cdot)\) admits non-polyhedral maximizers amongst \(n\)-dimensional Banach spaces. 

\subsection{A comment on notation} In the literature, the symbols \(\lambda(E,F)\) for the projection constant of \(E\) relative to \(F\) and \(\lambda(E)\) for the absolute projection constant of \(E\) are commonly used. This naturally leads to the shorthand notation \(\lambda_n\) for the maximal projection constant of order \(n\), cf. \cite{MR2725896}. In view of the formula \(A=\lambda_n\) from Theorem \ref{thm:maximizer} this notation could cause unnecessary confusion. This is why we switched to the greek letter \(\Pi\) to denote projection constants. This notation also appears in the monograph \cite[p. 49]{MR2882877}.  

\section{Relative projection constants}

\subsection{A formula for \(\Pi(E,F)\)}Let \(F=(\R^d, \norm{\cdot}_F)\) denote a Banach space and let \(\mathcal{L}(F)=(\mathcal{M}_d(\R), \norm{\cdot})\) be the Banach space of bounded linear operators from \(F\) into \(F\) equipped with the operator norm. The dual space of \(\mathcal{L}(F)\) is naturally identified with \((\mathcal{M}_d(\R), \nu_1(\cdot))\) via trace-duality, that is,
\[\nu_1(S)=\sup\big\{ \Tr( S T) : T\in \mathcal{M}_d(\R) \textrm { and } \norm{ T} =1 \big\} \quad \quad (S\in \mathcal{M}_d(\R)).\]
The norm \(\nu_1(\cdot)\) is called \textit{\(1\)-nuclear norm}. In general, it is not an easy task to compute \(\nu_1(S)\). But if \(F=\ell_\infty^d\), (\(F=\ell_1^d\) respectively), then
\[\nu_1(S)=\sum_{j=1}^d \, \norm{ S e_j}_{\infty}, \quad \quad \quad (\nu_1(S)=\sum_{j=1}^d \, \norm{ S^t e_j}_{\infty} \textrm{ respectively}).\] 
We will only work with overspaces \(F\) that are of this form. The following lemma is well established, cf. \cite[Lemma 1]{MR722257}, \cite[Lemma 32.3]{MR993774} or \cite[Lemma 3.12]{MR538861}. 

\begin{lemma}\label{lem:RelProj}
Let \(F=(\R^d, \norm{\cdot}_F)\) be a Banach space and suppose \(E\subset F\) is a linear subspace. Then
\[\Pi(E,F)=\max\big\{ \Tr(AP) : A\in \mathcal{M}_d(\R),\, \nu_1(A)=1 \textrm{ and } AP=PAP\big\},\]
where \(P\) denotes the transformation matrix of the orthogonal projection \(F\to E\). 
\end{lemma}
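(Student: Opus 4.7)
My plan is to realize $\Pi(E,F)$ as the distance in $\mathcal{L}(F)$ from $P$ to a suitable linear subspace, and then apply trace-duality. First I would note that a linear map $Q\colon F\to F$ is a projection onto $E$ if and only if $Q|_E=\mathrm{id}_E$ and $Q(F)\subset E$ (the squaring identity $Q^2=Q$ is forced, since $Qx\in E$ gives $Q(Qx)=Qx$). Consequently, the set of projections onto $E$ is the affine subspace $P+\mathcal{M}$, where
\[
\mathcal{M}:=\{L\in\mathcal{L}(F):LP=0,\ PL=L\}=\{PM(I-P):M\in\mathcal{M}_d(\R)\}
\]
is a linear subspace (the second description is obtained by writing $L=PL(I-P)+PLP$ and noting $PLP=0$). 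Since $\mathcal{M}$ is a linear subspace, $\Pi(E,F)=\inf_{L\in\mathcal{M}}\norm{P+L}=\mathrm{dist}(P,\mathcal{M})$.

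Next I would invoke the Hahn-Banach duality formula for the distance to a subspace,
\[
\mathrm{dist}(P,\mathcal{M})=\sup\bigl\{\abs{\phi(P)}:\phi\in\mathcal{L}(F)^*,\ \norm{\phi}\leq 1,\ \phi|_\mathcal{M}=0\bigr\},
\]
and translate it via trace-duality. Writing $\phi(X)=\Tr(AX)$ with $\nu_1(A)=\norm{\phi}$, the annihilator condition $\phi|_\mathcal{M}=0$ reads $\Tr\bigl(A\cdot PM(I-P)\bigr)=0$ for every $M\in\mathcal{M}_d(\R)$. Cyclicity of the trace rewrites this as $\Tr\bigl(M(I-P)AP\bigr)=0$ for every $M$, which is equivalent to $(I-P)AP=0$, i.e.\ $AP=PAP$.

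Combining the two steps yields
\[
\Pi(E,F)=\sup\bigl\{\abs{\Tr(AP)}:\nu_1(A)\leq 1,\ AP=PAP\bigr\}.
\]
To match the statement, I would then remove the absolute value by noting that $A\mapsto -A$ preserves both constraints; use finite-dimensional compactness of $\{A:\nu_1(A)\leq 1\}$ together with the closedness of $\{A:AP=PAP\}$ to promote the sup to a maximum; and rescale to $\nu_1(A)=1$ (the case $\Pi(E,F)=0$, which forces $E=0$, being trivial). The only non-routine step is the cyclic-trace identification of the annihilator $\mathcal{M}^\perp$ with $\{A:AP=PAP\}$; the remainder is standard functional-analytic bookkeeping.
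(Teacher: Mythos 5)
The paper does not prove this lemma; it only cites it as well established (K\"onig--Lewis--Lin, Tomczak-Jaegermann, etc.). Your argument is correct and is essentially the standard duality proof underlying those references: identifying the projections onto \(E\) with the affine subspace \(P+\mathcal{M}\), \(\mathcal{M}=\{PM(I-P)\}\), applying the Hahn--Banach distance formula, and computing \(\mathcal{M}^{\perp}=\{A:(I-P)AP=0\}\) via cyclicity of the trace. All the steps check out, including the verification that \(\mathcal{M}\) is a linear subspace and the routine passage from supremum with \(\nu_1(A)\le 1\) to maximum with \(\nu_1(A)=1\).
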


Theorem \ref{thm:maximizer} and the lemma above yield an \(n\)-dimensional subspace \(E\subset \ell_1^d\) with \(\Pi(E)=\Pi(E, \ell_1^d)=\Pi_n\). Further results in this direction have been obtained by K\"onig and Tomczak-Jaegermann, cf. \cite{MR2052228}. 

\begin{proposition}
Let \(n\geq 1\) be an integer. There exists an integer \(d\geq 1\) and an \(n\)-dimensional subspace \(E\subset \ell_1^d\) such that \(\Pi(E)=\Pi(E, \ell_1^d)=\Pi_n\). 
\end{proposition}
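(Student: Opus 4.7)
The plan is to combine Theorem~\ref{thm:maximizer} with Lemma~\ref{lem:RelProj}. Since $B = \Pi_n$ is attained, apply Theorem~\ref{thm:maximizer} to obtain a $d\times d$ orthogonal projection matrix $P$ of rank $n$ with $\abs{P}$ positive and $\rho(\abs{P}) = \Pi_n$, and set $E := P(\R^d) \subset \ell_1^d$; this is an $n$-dimensional subspace. The goal is to verify that $\Pi(E,\ell_1^d) = \Pi(E) = \Pi_n$.

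The chain $\Pi(E,\ell_1^d) \leq \Pi(E) \leq \Pi_n$ is standard: the second inequality is the definition of $\Pi_n$, while the first is monotonicity of the relative projection constant under enlargement of the overspace, applied to any isometric embedding $\ell_1^d \hookrightarrow \ell_\infty(\Gamma)$. It therefore remains to establish $\Pi(E,\ell_1^d) \geq \Pi_n$.

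For this reverse inequality, I invoke the variational formula of Lemma~\ref{lem:RelProj}: it suffices to exhibit $A \in \mathcal{M}_d(\R)$ with $\nu_1(A) = 1$, $AP = PAP$, and $\Tr(AP) = \rho := \Pi_n$. By the Perron--Frobenius theorem applied to the positive symmetric matrix $\abs{P}$, there is a strictly positive eigenvector $x > 0$ with $\abs{P}x = \rho x$. Write $S := \Sgn(P)$, which lies in $\mathcal{S}_d$ since $\abs{P}$ positive forces $P_{ii} = \norm{Pe_i}_2^2 > 0$, and consider the ansatz
\[
A_{ji} := \frac{x_j}{\norm{x}_1}\, S_{ji}.
\]
A short computation using $P = P^t$, the identity $S_{ji}P_{ij} = \abs{P_{ij}}$, and the eigenvalue equation yields $\nu_1(A) = \sum_j (x_j/\norm{x}_1) = 1$ and
\[
\Tr(AP) = \frac{1}{\norm{x}_1}\sum_{i,j} x_j \abs{P_{ij}} = \frac{1}{\norm{x}_1}\sum_i (\abs{P}x)_i = \rho.
\]

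The main obstacle is the invariance identity $AP = PAP$, equivalently $A(E) \subseteq E$, which does not hold for a generic $P$ with $\abs{P}$ positive and must be extracted from the extremality of the chosen $P$. The intended argument combines the equality $A = B$ in Theorem~\ref{thm:maximizer} with Fan's principle $\Tr(SP) \leq \sum_{k=1}^n \lambda_k(S)$ to force, at some maximizer $P$, the additional structural properties that $\abs{P}$ has constant row sums equal to $\rho$ (whence $\mathbf{1}$ may be taken as Perron eigenvector and $A = S/d$) and that $P$ commutes with $S$ (whence $P$ is the spectral projection of $S$ onto its top-$n$ eigenspace). Under these conditions the commutation $SP = PS$ immediately yields $PAP = PSP/d = P^2 S/d = PS/d = SP/d = AP$, completing the lower bound and hence the proof.
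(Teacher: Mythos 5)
Your framework is right: reduce to exhibiting $A$ with $\nu_1(A)=1$, $AP=PAP$ and $\Tr(AP)=\Pi_n$, and your computations of $\nu_1(A)$ and $\Tr(AP)$ for the ansatz $A_{ji}=(x_j/\norm{x}_1)S_{ji}$ are correct. The gap is exactly where you flag it: the invariance $AP=PAP$. Your proposed fix asserts that some maximizer $P$ of $B$ has constant absolute row sums equal to $\rho(\abs{P})=\Pi_n$ \emph{and} commutes with $S=\Sgn(P)$. Neither property follows from Theorem \ref{thm:maximizer}, and together they are far from a harmless strengthening: a rank-$n$ orthogonal projection matrix with $SP=PS$ and constant row sums equal to $\Pi_n$ would make $P$ itself a minimal projection of $\ell_1^d$ onto $E=P(\ell_1^d)$ with $\norm{P}=\Pi(E,\ell_1^d)=\Pi_n$, i.e.\ it would establish statement (3) of Proposition \ref{prop:SupA} and answer Question \ref{qe:conjecture} affirmatively --- precisely what the paper leaves open for $n\geq 3$. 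What Fan's equality case (Lemma \ref{lem:EqCase}) actually yields is commutation of a Fan-maximizing projection $Q$ with the \emph{weighted} matrix $\sqrt{D}S\sqrt{D}$, where $D$ is built from the Perron eigenvector; it does not give commutation of your extremal $P$ with $S$. Likewise, constancy of the row sums is only obtained \emph{approximately} in the paper (that is the whole content of Theorem \ref{thm:main1} and Corollary \ref{cor:lower}).

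The paper avoids this obstruction by not taking $E$ to be the range of the extremal orthogonal projection. Starting from $P$ with $\rho(\abs{P})=\Pi_n$ and Perron eigenvector $v$, it sets $D=\mathrm{diag}(v_1^2,\ldots,v_d^2)$, invokes Fan's theorem to get a rank-$n$ orthogonal projection $Q$ commuting with $\sqrt{D}S\sqrt{D}$ and satisfying $\Tr(\sqrt{D}S\sqrt{D}\,Q)\geq v^t\abs{P}v=\Pi_n$, and then conjugates: $R:=\sqrt{D}Q\sqrt{D^{-1}}$ is a (generally non-orthogonal) rank-$n$ projection commuting with $DS$, and the subspace is $E:=R(\ell_1^d)$. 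The commutation $R(DS)=(DS)R$ supplies exactly the invariance $DSP_0=P_0DSP_0$ needed to apply Lemma \ref{lem:RelProj}, giving $\Pi(E,\ell_1^d)\geq\Tr(DSR)\geq\Pi_n$; the reverse chain $\Pi(E,\ell_1^d)\leq\Pi(E)\leq\Pi_n$ is the standard one you already have. To repair your argument you should adopt this change of subspace rather than assert extra structure of the maximizer.
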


\begin{proof}  
From Theorem \ref{thm:maximizer} we get a matrix \(P\in \mathcal{P}_n\) such that \(\abs{P}\) has only positive entries and \(\rho(\abs{P})=\Pi_n\). Let \(v=(v_1, \ldots, v_d)\) denote the unique positive vector with 
\(\norm{v}_2=1\) and \(\abs{P} v = \Pi_n v\) and set \(D:=\textrm{diag}(v_1^2, \ldots, v_d^2)\). Further, set \(S:=\Sgn(P)\) and let \(Q\in \mathcal{P}_n\) be a matrix such that \(\sqrt{D} S \sqrt{D}\) and \(Q\) commute.
Using a result due to Fan, cf. \cite[Theorem 1]{MR34519} or \eqref{eq:KyFan}, we estimate
\begin{equation*}
\begin{split}
\Pi_n=v^t \abs{P} v&=\Tr(\sqrt{D} \Sgn(P) \sqrt{D} P) \\
&\leq \Tr(\sqrt{D} S \sqrt{D} Q) \\
&=\Tr(D S \sqrt{D} Q \sqrt{D^{-1}}).
\end{split}
\end{equation*}
The matrix \(R:= \sqrt{D} Q \sqrt{D^{-1}}\) is a projection matrix of rank \(n\). We set \(E:=R(\ell_1^d)\subset \ell_1^d\). By construction, \(R\) and \(DS\) commute and thereby \(DS P_0=P_0 DS P_0\), where \(P_0\) denotes the transformation matrix of the orthogonal projection \(\ell_1^d\to E\). 
Thus, we may invoke Lemma \ref{lem:RelProj} to conclude that
\[\Tr(D S R) =\Tr(D S P_0)\leq \Pi(E, \ell_1^d).\]
Hence, the space \(E\subset \ell_1^d\) has the desired properties. 
\end{proof}

\subsection{Sums of \(n\)-largest eigenvalues} Let \(M\) be a real \(d\times d\) matrix and let \(\lambda_1(M), \ldots, \lambda_d(M)\in \mathbb{C}\) denote the eigenvalues of \(M\). Fix an integer \(1\leq n\leq d\). We say a subset \(\{z_1, \ldots, z_n\}\subset  \mathbb{C}\) is \textit{closed under complex conjugation} (which we shorten to cucc) if \(\big\{z_1, \ldots, z_n\big\}=\big\{\overline{z_1}, \ldots, \overline{z_n}\big\}\). We set
\[\pi_n(M):=\sup\Bigl\{ \sum_{i=1}^n \lambda_{\sigma(i)}(M) : \sigma\in S_d, \big\{\lambda_{\sigma(1)}(M), \ldots, \lambda_{\sigma(n)}(M)\big\} \textrm{ is cucc}\Bigr\}.\]
By definition, \(\sup \varnothing=-\infty\). Our interest in these special sums of eigenvalues stems from the fact that they may be used to compute the exact values of the maximal relative projection constants. 

\begin{lemma}\label{lem:mats}
Let \(d\geq n \geq 1\) be integers. Then
\[\Pi(n,d):=\max\big\{ \pi_n(SD) : S\in \widehat{\mathcal{S}}_d \textrm{ and } D\in \mathcal{D}_d  \big\}, \]
where \(\widehat{\mathcal{S}}_d\) denotes the set of all matrices \(\widehat{S}\in \mathcal{M}_d(\R)\) such that the absolute value of every entry of \(\widehat{S}\) is less than or equal to one, and \(\mathcal{D}_d\) consists of all diagonal matrices \(D\in \mathcal{M}_d(\R)\) with non-negative diagonal entries and \(\Tr(D)=1\).
\end{lemma}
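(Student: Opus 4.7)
The plan is to combine \lemref{lem:RelProj} (with overspace $F=\ell_\infty^d$) with a factorization of matrices of unit $1$-nuclear norm and an identification of $\Tr(AP)$, under the constraint $AP=PAP$, with a sum of $n$ eigenvalues of $A$.

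By definition $\Pi(n,d)$ is the maximum of $\Pi(E,\ell_\infty^d)$ over $n$-dimensional subspaces $E\subset \ell_\infty^d$, and every such $E$ is the range of a unique $P\in \mathcal{P}_{n,d}$. So applying \lemref{lem:RelProj} with $F=\ell_\infty^d$ yields
\[\Pi(n,d)=\max\bigl\{\Tr(AP): A\in \mathcal{M}_d(\R),\ \nu_1(A)=1,\ P\in \mathcal{P}_{n,d},\ AP=PAP\bigr\}.\]
Because $\nu_1(A)=\sum_{j=1}^d \norm{Ae_j}_\infty$ on $\ell_\infty^d$, I would parametrize such $A$ by setting $d_j:=\norm{Ae_j}_\infty$, letting $s_j\in[-1,1]^d$ be the $j$-th column of $A$ rescaled by $1/d_j$ (or $0$ if $d_j=0$), and writing $A=SD$ with $S\in \widehat{\mathcal{S}}_d$ and $D\in \mathcal{D}_d$. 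Conversely every such product satisfies $\nu_1(SD)\leq 1$, and since $\pi_n$ is positively homogeneous of degree one the supremum over $\nu_1\leq 1$ coincides with that over $\nu_1=1$.

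The heart of the proof is to show that for any fixed real $d\times d$ matrix $A$,
\[\max\bigl\{\Tr(AP): P\in \mathcal{P}_{n,d},\ AP=PAP\bigr\}=\pi_n(A).\]
The constraint $AP=PAP$ is equivalent to $V:=P(\R^d)$ being $A$-invariant: for $y\in V$ one has $y=Py$ and hence $Ay=APy=PAPy\in V$; the converse is immediate. A block-matrix computation in a basis adapted to $V\oplus V^\perp$ then gives $\Tr(AP)=\Tr(A|_V)$, which is a sum of $n$ eigenvalues of $A$ counted with algebraic multiplicity. Because $A|_V$ is a real endomorphism this multiset is cucc, giving $\Tr(AP)\leq \pi_n(A)$. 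For the reverse inequality, given an optimal cucc choice $\{\lambda_{\sigma(1)},\ldots,\lambda_{\sigma(n)}\}$ I would construct a real $n$-dimensional $A$-invariant subspace $V$ with exactly that spectrum and take $P$ to be the orthogonal projection onto $V$.

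The main technical point is this last realization. Passing to the real Jordan decomposition of $A$, each real Jordan block of size $k$ for an eigenvalue $\lambda\in\R$ contains $A$-invariant subspaces of every dimension $m\leq k$ contributing $m$ copies of $\lambda$ to the spectrum; each pair of conjugate Jordan blocks of size $k$ for $\lambda,\bar\lambda\in\mathbb{C}\setminus\R$ yields real invariant subspaces of every even dimension $2m\leq 2k$ contributing $m$ copies of both $\lambda$ and $\bar\lambda$. Since a cucc multiset of $n$ eigenvalues is precisely of this shape, any such subset can be realized; the resulting $P$ then satisfies $\Tr(AP)=\pi_n(A)$, and combining with the factorization step yields the claimed formula.
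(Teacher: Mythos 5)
Your plan is correct and follows essentially the same route as the paper, whose entire proof is a one-line appeal to \lemref{lem:RelProj} together with the real Schur form: your expansion — the factorization \(A=SD\) of unit-\(\nu_1\) matrices, the observation that \(AP=PAP\) means \(P(\R^d)\) is \(A\)-invariant with \(\Tr(AP)=\Tr(A|_V)\), and the realization of every cucc eigenvalue selection by a real invariant subspace — is precisely what that one-liner compresses. The only cosmetic difference is that you build the invariant subspaces from the real Jordan decomposition where the paper cites the real Schur form; both canonical forms supply the required flag of real invariant subspaces.
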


\begin{proof}
Direct consequence of Lemma \ref{lem:RelProj} and the fact that every real square matrix admits a real Schur form, cf. \cite[Theorem 2.3.4. (b)]{MR2978290}. 
\end{proof}
Lemma \ref{lem:mats} still holds true if the class of matrices \(\widehat{\mathcal{S}}_d\) is restricted to a finite subset \(\mathcal{S}_d\subset \widehat{\mathcal{S}}_d\). Indeed, Chalmers and Lewicki, cf. \cite[Theorem 2.3]{MR2725896}, have established that
\begin{equation}\label{eq:ChalmersLewicki}
\Pi(n,d)=\max\big\{ \pi_n(\sqrt{D} S \sqrt{D} ) : S\in \mathcal{S}_d, D\in \mathcal{D}_d \, \},
\end{equation}
where \(\mathcal{S}_d\) denotes the set of all \(d\times d\) matrices \(S\in \mathcal{S}\). 

\section{Auxiliary results from matrix analysis}

In this section, we gather several results from matrix analysis that will be used repeatedly in the proofs of Theorem \ref{thm:main1} and Theorem \ref{thm:maximizer}. 
\subsection{Equality case of an inequality due to Fan}
Let \(A\) be a symmetric \(d\times d\) matrix and let \(\lambda_1(A)\geq \ldots \geq \lambda_d(A)\) be the eigenvalues of \(A\). Fix an integer \(1\leq n \leq d\).
A well-known result of Fan, cf. \cite[Theorem 1]{MR34519}, states that
\begin{equation}\label{eq:KyFan}
\pi_n(A):=\sum_{i=1}^n \lambda_i(A)=\max\big\{\Tr(AP) : P\in \mathcal{P}_{n,d} \, \big\},
\end{equation}
where \(\mathcal{P}_{n,d}\) denotes the set of all \(d\times d\) matrices \(P\in \mathcal{P}_n\). 
Clearly, if \(A v_i=\lambda_i(A) v_i\), for \(i=1, \ldots, n\), and the vectors \(v_i\) are orthonormal, then the orthogonal projection \(P\) onto the linear span of \(v_1, \ldots, v_n\) is a maximizer of the right hand side of \eqref{eq:KyFan}. The following elementary lemma shows that every maximizer is of this form.  

\begin{lemma}\label{lem:EqCase}
Let \(A\) be a symmetric  \(d\times d\) matrix and let \(1 \leq n \leq d\) be an integer.
If \(P\in \mathcal{P}_{n,d}\) satisfies \[\pi_n(A)=\Tr(AP),\]
then \(A\) and \(P\) commute. 
\end{lemma}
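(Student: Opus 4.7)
The plan is to orthogonally diagonalize $A$ and then extract the block structure of $P$ from the equality case in Fan's bound. Write $A = U D U^t$ with $U$ orthogonal and $D = \textrm{diag}(\lambda_1(A), \ldots, \lambda_d(A))$, and set $Q := U^t P U$. Then $Q \in \mathcal{P}_{n,d}$, and since orthogonal conjugation preserves both trace and commutation, the hypothesis reads $\Tr(DQ) = \sum_{i=1}^n \lambda_i(A)$, and it suffices to prove that $D$ and $Q$ commute. Because $Q$ is a symmetric projection of rank $n$, its diagonal satisfies $Q_{ii} \in [0,1]$ with $\sum_{i=1}^d Q_{ii} = n$ (the upper bound follows because $I-Q$ is also an orthogonal projection, and the trace identity from $\Tr(Q)=\mathrm{rank}(Q)$).

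Next I would subtract the null term $\lambda_n(A)\bigl(\sum_i Q_{ii} - n\bigr) = 0$ to rewrite the trace equality $\sum_i \lambda_i(A)\, Q_{ii} = \sum_{i=1}^n \lambda_i(A)$ in the telescoped form
\[
0 = \sum_{i=1}^n (\lambda_i(A) - \lambda_n(A))(Q_{ii} - 1) + \sum_{i=n+1}^d (\lambda_i(A) - \lambda_n(A))\, Q_{ii}.
\]
By the ordering of the eigenvalues and the constraints on $Q_{ii}$, each summand is non-positive, so each must vanish. This forces $Q_{ii} = 1$ whenever $\lambda_i(A) > \lambda_n(A)$ and $Q_{ii} = 0$ whenever $\lambda_i(A) < \lambda_n(A)$.

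I would then exploit the identity $\sum_j Q_{ij}^2 = Q_{ii}$, which follows from $Q = Q^t$ together with $Q^2 = Q$: if $Q_{ii} = 1$ then every off-diagonal entry in the $i$-th row (and column) vanishes, while if $Q_{ii} = 0$ the whole $i$-th row (and column) vanishes. Partitioning $\{1,\ldots,d\} = I_+ \sqcup I_= \sqcup I_-$ according to whether $\lambda_i(A)$ is strictly greater, equal to, or strictly less than $\lambda_n(A)$, this makes $Q$ block diagonal, with the identity on $I_+$, some projection on $I_=$, and zero on $I_-$. The matrix $D$ is block diagonal with respect to the same partition; on $I_=$ it acts as the scalar $\lambda_n(A)$ and hence commutes with any projection block there, while on $I_+$ and $I_-$ the corresponding blocks of $Q$ are themselves multiples of the identity, so they commute with the diagonal matrix $D$. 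Thus $DQ = QD$, as required.

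The main subtlety, which is exactly what prevents a one-line argument via uniqueness of eigenprojections, is the degenerate case $\lambda_n(A) = \lambda_{n+1}(A)$: the maximizer $P$ is then genuinely non-unique, so one cannot simply identify it with the spectral projector onto the top-$n$ eigenspace. The analysis above works uniformly because it confines all of the non-uniqueness to the $\lambda_n(A)$-eigenspace of $A$, where $A$ is a scalar and therefore commutes with any projection onto a subspace of that eigenspace.
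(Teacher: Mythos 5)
Your proof is correct and follows essentially the same route as the paper: your diagonal entries \(Q_{ii}\) of \(Q=U^tPU\) are exactly the coefficients \(\alpha_i=\sum_{j=1}^n\abs{\langle v_i,u_j\rangle}^2\) in the paper's argument, and both proofs extract the equality case to force \(Q_{ii}\in\{0,1\}\) off the \(\lambda_n(A)\)-eigenspace and then conclude commutation from the resulting block structure. If anything, your use of \(\sum_j Q_{ij}^2=Q_{ii}\) makes the paper's terse final step ("simultaneously diagonalizable and thereby commute") fully explicit.
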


\begin{proof}
Let \(v_1, \ldots, v_d\in \R^d\) be orthonormal eigenvectors of \(A\) such that \(A v_i =\lambda_i v_i\), where \(\lambda_1 \geq \dotsm \geq \lambda_d\) are the eigenvalues of \(A\). Further, let \(u_1, \ldots, u_d\in \R^d\) be orthonormal eigenvectors of \(P\) such that \(Pu_i=u_i\) for all \(1 \leq i \leq n\). We get
\begin{equation}\label{eq:equality1}
\sum_{i=1}^n \lambda_i=\Tr(AP)=\sum_{i=1}^d  \alpha_i \lambda_i,
\end{equation}
where \(\alpha_i:=\sum_{j=1}^n \abs{\langle v_i, u_j \rangle}^2\). Note that \(0\leq \alpha_i \leq 1\) and \(\alpha_1+\dotsm \alpha_d=n\). Write \(\varepsilon_i:=1-\alpha_i\), \(\varepsilon:=\varepsilon_1+\dotsm+\varepsilon_n\) and \(\alpha:=\alpha_{n+1}+\dotsm+\alpha_d\). Clearly, \(\alpha=\varepsilon\).  We estimate
\[\sum_{i=1}^d \alpha_i \lambda_i \leq (\varepsilon+\alpha_n) \lambda_n+\sum_{i=1}^{n-1} \alpha_i \lambda_i \leq \sum_{i=1}^n \lambda_i.\]
Now, from \eqref{eq:equality1} we may conclude that the above inequalities are equalities, so
\begin{equation*}
\begin{cases}
\alpha_i=1 \textrm{ or } \lambda_i=\lambda_n & \textrm{ for all } 1 \leq i \leq n, \\
\alpha_i=0 \textrm{ or } \lambda_i=\lambda_n & \textrm{ for all } n \leq i \leq d.
\end{cases}
\end{equation*} 
In particular, those vectors \(v_i\) that do not belong to the eigenspace of \(A\) associated to \(\lambda_n\)
are orthonormal eigenvectors of \(P\). This implies that \(A\) and \(P\) are simultaneously diagonalizable and thereby commute, as desired.
\end{proof}

\subsection{Sign patterns of maximizers of \(\Pi(n,d)\)}  We shall need the following lemma
which shows that for a maximizer of \eqref{eq:ChalmersLewicki} there exist a matrix in \(\mathcal{P}_{n,d}\) such that both matrices have the same sign-pattern.  

\begin{lemma}\label{lem:ComMax}
Let \(d> n\geq 1\) be integers and suppose \(D\in \mathcal{D}_d\) is a diagonal matrix with positive diagonal entries. Then
\[\max\big\{ \pi_n( \sqrt{D} \, \widehat{S} \sqrt{D}) : \widehat{S}\in \widehat{\mathcal{S}}_d \big\}=\max\big\{ \pi_n( \sqrt{D} S \sqrt{D}) : S\in \mathcal{S}_d \big\}.\] 
Moreover, if \(S\in \widehat{\mathcal{S}}_d\) is a symmetric matrix such that \(\pi_n( \sqrt{D} S \sqrt{D})\) is maximal amongst  \(\pi_n( \sqrt{D} S^\prime \sqrt{D})\), for \(S^\prime \in \widehat{\mathcal{S}}_d \),
then \(S\in \mathcal{S}_d\) and for every matrix \(P\in \mathcal{P}_{n}\) with \(\pi_n( \sqrt{D} S \sqrt{D})=\Tr(\sqrt{D} S \sqrt{D}\, P)\) it holds \(\Sgn(P)=S\). In particular, \(\abs{P}\) is a positive matrix. 
\end{lemma}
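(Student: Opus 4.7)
The plan is to prove the displayed identity by two successive reductions on $\widehat{S}$ --- symmetrisation and sign rounding --- and to read the ``moreover'' clause off the tightness of those reductions at a maximiser. For the first reduction, given $\widehat{S} \in \widehat{\mathcal{S}}_d$, set $\widehat{S}_{\mathrm{sym}} := \tfrac{1}{2}(\widehat{S} + \widehat{S}^t) \in \widehat{\mathcal{S}}_d$ and $M := \sqrt{D}\,\widehat{S}\,\sqrt{D}$. Applying the real Schur decomposition to $M$ and reordering its diagonal blocks so that a cucc set of $n$ eigenvalues realising $\pi_n(M)$ fills the top-left $n\times n$ block produces a rank-$n$ orthogonal projection $P \in \mathcal{P}_{n,d}$ with $\pi_n(M) = \Tr(MP)$. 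Since $P^t = P$, one has $\Tr(MP) = \Tr(M^tP)$, so averaging yields $\pi_n(M) = \Tr(\sqrt{D}\,\widehat{S}_{\mathrm{sym}}\,\sqrt{D}\,P) \leq \pi_n(\sqrt{D}\,\widehat{S}_{\mathrm{sym}}\,\sqrt{D})$ by Fan's identity \eqref{eq:KyFan}; hence we may assume $\widehat{S}$ symmetric.

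For the second reduction, pick $P \in \mathcal{P}_{n,d}$ with $\pi_n(\sqrt{D}\,\widehat{S}\,\sqrt{D}) = \Tr(\sqrt{D}\,\widehat{S}\,\sqrt{D}\,P)$ via \eqref{eq:KyFan} and set $N := \sqrt{D}\,P\,\sqrt{D}$; because $P = P^tP$, the diagonal entries $N_{ii} = d_i\|Pe_i\|^2$ are non-negative. Define $S \in \mathcal{S}_d$ by $S_{ii} = 1$ and $S_{ij} = \Sgn(N_{ij})$ for $i \neq j$ (with an arbitrary $\pm 1$ choice where $N_{ij} = 0$). Entry-wise $S_{ij}N_{ij} = |N_{ij}| \geq \widehat{S}_{ij}N_{ij}$, hence
\[
\pi_n(\sqrt{D}\,S\,\sqrt{D}) \geq \Tr(\sqrt{D}\,S\,\sqrt{D}\,P) = \Tr(SN) \geq \Tr(\widehat{S}N) = \pi_n(\sqrt{D}\,\widehat{S}\,\sqrt{D}),
\]
and combined with $\mathcal{S}_d \subset \widehat{\mathcal{S}}_d$ this yields the first identity.

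For the ``moreover'' part, let $S$ be a symmetric maximiser and $P$ any Fan maximiser for $\sqrt{D}\,S\,\sqrt{D}$; let $S^\star \in \mathcal{S}_d$ come from $(S,P)$ by the rounding recipe. Maximality forces every inequality in the displayed chain (taking $\widehat{S} = S$) to be an equality, so $\sum_{i,j}(S^\star_{ij} - S_{ij})N_{ij} = 0$; each summand is non-negative, hence zero, pinning down $S_{ij} = \Sgn(P_{ij})$ wherever $P_{ij} \neq 0$ and $S_{ii} = 1$ wherever $P_{ii} > 0$. Upgrading these to $S \in \mathcal{S}_d$ and $\Sgn(P) = S$ requires showing $|P|$ has no zero entries, which I expect to be the main obstacle. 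My plan is by contradiction: suppose $P_{i_0 j_0} = 0$. The first-order variation of $\Tr(\sqrt{D}\,S\,\sqrt{D}\,P)$ in $S_{i_0 j_0}$ vanishes, so I would apply the second-order eigenvalue perturbation formula for the symmetric matrix $\sqrt{D}\,S\,\sqrt{D}$: perturbing by $\epsilon K$ with $K = \sqrt{d_{i_0}d_{j_0}}(e_{i_0}e_{j_0}^t + e_{j_0}e_{i_0}^t)$ contributes a non-negative second-order term $\epsilon^2 \sum_{i \leq n < j}(v_i^t K v_j)^2/(\lambda_i - \lambda_j)$ to $\pi_n$, whose strict positivity in some admissible direction would contradict maximality. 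Ruling out all accidental orthogonality relations $v_i^t K v_j = 0$ that could force every such contribution to vanish --- using $d > n$ and varying $(i_0,j_0)$ across the zero-set of $P$ --- is the technical crux.
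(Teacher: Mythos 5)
Your proof of the displayed identity is correct and is essentially a self-contained version of the paper's argument: where the paper cites Fan's theorem on $\pi_n(\tfrac12(M+M^t))\geq\pi_n(M)$ \cite[Theorem 2]{MR33981}, you reprove it via the real Schur form and $\Tr(MP)=\Tr(M^tP)$, and your sign-rounding step $\Tr(SN)\geq\Tr(\widehat{S}N)$ with $N=\sqrt{D}P\sqrt{D}$ makes explicit an implication the paper leaves implicit. Both of these steps are fine, and the observation that equality at a maximiser pins down $s_{ij}=\mathrm{sgn}(p_{ij})$ \emph{wherever $p_{ij}\neq 0$} is also correct.

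The genuine gap is exactly where you flag it: proving that $\abs{P}$ has no zero entries. Your plan via the second-order eigenvalue perturbation formula does not go through as stated, for two reasons. First, the formula $\sum_{i\le n<j}(v_i^tKv_j)^2/(\lambda_i-\lambda_j)$ requires a spectral gap $\lambda_n>\lambda_{n+1}$, which you have no control over. Second, and more importantly, the ``accidental orthogonality'' you hope to rule out is not accidental: the second-order term for $K=e_{i_0}e_{j_0}^t+e_{j_0}e_{i_0}^t$ genuinely vanishes whenever $K$ commutes with $P$, i.e.\ whenever the $i_0$-th and $j_0$-th rows of $P$ are supported on the diagonal with $p_{i_0i_0}=p_{j_0j_0}$. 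Such $P$ exist, so no single zero entry yields a contradiction; one must propagate the constraint over \emph{all} zero entries and only then invoke $d>n$. The paper's mechanism for doing this cleanly is Lemma~\ref{lem:EqCase} applied to a \emph{finite} (not infinitesimal) perturbation: if $p_{ij}=0$, set $A_0:=\sqrt{D}\bigl(S-\alpha(e_ie_j^t+e_je_i^t)\bigr)\sqrt{D}$ with $\alpha=\pm1$ chosen so that the perturbed matrix stays in $\widehat{\mathcal{S}}_d$; then $\Tr(A_0P)=\Tr(AP)=\pi_n(A)\geq\pi_n(A_0)\geq\Tr(A_0P)$, so equality holds in Fan's inequality for $A_0$ as well, and Lemma~\ref{lem:EqCase} gives $[A_0,P]=0$. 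Combined with $[A,P]=0$ this forces $p_{ii}=p_{jj}$ and $p_{ik}=p_{jk}=0$ for $k\neq i$, $k\neq j$; iterating over the newly produced zero entries shows $P$ is a scalar multiple of the identity, hence $d=\Tr(P)/(n/d)$ forces $d=n$, contradicting $d>n$. You should replace your perturbation step with this commutation argument (or supply an equivalent global argument); as written, the positivity of $\abs{P}$ --- and with it $S\in\mathcal{S}_d$ and $\Sgn(P)=S$ --- remains unproved.
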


We use the symbol \(\Sgn(A)\) to denote the \textit{sign pattern matrix} of a matrix \(A\), that is, the \((i,j)\)-entry of \(\Sgn(A)\) is equal to \(-1\) if \(a_{ij}<0\), \(0\) if \(a_{ij}=0\), and \(1\) if \(a_{ij}>0\).

\begin{proof}[Proof of Lemma \ref{lem:ComMax}]
Let \(\widehat{S}\in \widehat{\mathcal{S}}_d\) be a matrix such that \(\pi_n( \sqrt{D} \, \widehat{S} \sqrt{D})\) is maximal amongst \(\pi_n( \sqrt{D} S^\prime \sqrt{D})\), for \(S^\prime \in \widehat{\mathcal{S}}_d \). Due to a result of Fan, cf. \cite[Theorem 2]{MR33981}, we estimate
\[\pi_n\bigl(\sqrt{D} \, \widehat{S} \sqrt{D}\bigr)\leq \pi_n\bigl(\sqrt{D}\tfrac{1}{2}( \widehat{S}+\widehat{S}^t)\sqrt{D}\bigr). \]
Now, we prove the moreover part of the lemma. By Lemma \ref{lem:EqCase}, \(A:=\sqrt{D} S \sqrt{D}\) and \(P\) commute. We claim that \(\abs{P}\) is positive. Suppose there exists an entry \(p_{ij}\) of \(P\) that is equal to zero. It holds
\[\pi_n\left(A\right)= \Tr(AP)=\Tr(A_0P), \]
where \(A_0:=\sqrt{D}\bigl(S-\alpha(e_i e_j^t+e_j e_i^t)\bigr)\sqrt{D}\), where \(\alpha=1\) if \(s_{ij}\geq 0\) and \(\alpha=-1\) otherwise. 
Using \eqref{eq:KyFan}, we obtain
\begin{equation}\label{eq:aux1}
\Tr(A_0P)\leq \pi_n(A_0) \leq \pi_n(A),
\end{equation}
so \(\Tr(A_0 P)=\pi_n(A_0)\). Now, Lemma \ref{lem:EqCase} tells us that \(A_0\) and \(P\) commute. This amounts to
\begin{equation}
\begin{cases}
p_{ii}=p_{jj} & \\
p_{ik}=0 & \textrm{ for all } k\neq i, \\
p_{jk}=0 & \textrm{ for all } k\neq j.
\end{cases}
\end{equation}
By repeating the argument above for each entry \(p_{ik}\), where \(k\neq i\), it follows that \(P\) is a constant multiple of the identity. Because of  \(P\in \mathcal{P}_n\) we obtain \(d=\Tr(P)=n\). Hence, we have shown that \(\abs{P}\) is positive provided that \(d > n \). Since
\[Tr(AP)=\sum_{i,j=1}^d \sqrt{d_i d_j} \,  s_{ij} \,p_{ij} \leq \Tr(\sqrt{D}\Sgn(P)\sqrt{D} P)\leq \pi_n(\sqrt{D} \Sgn(P) \sqrt{D}) \]
and \(\abs{P}\) is a positive matrix, we may use the maximality of \(\pi_n(A)\) to conclude that \(\Sgn(P)=S\). This completes the proof.  
\end{proof}

\subsection{Spectral gaps}Let \(P\in \mathcal{P}_n\) be a matrix and let \(\lambda_1 \geq  \ldots \geq \lambda_d\) be the eigenvalues of \(\abs{P}\). The following lemma gives an upper bound of \(c:=\tfrac{\lambda_2}{\lambda_1}\) provided that \(\lambda_1=\rho(\abs{P})\) is close enough to \(\Pi_n\). 

\begin{lemma}\label{lem:upperC}
Let \(P\in \mathcal{P}_n\) be an orthogonal projection matrix of rank \(n\) such that \(\abs{P}\) is a positive matrix. Write \(\lambda_1 > \lambda_2 \geq  \ldots \geq \lambda_d\) for the eigenvalues of \(\abs{P}\). If \(\lambda_1 > (\sqrt{3}-1)\sqrt{n}\), then
\[0< c < \frac{\sqrt{n}}{\lambda_1}-\frac{\lambda_1}{2 \sqrt{n}} <1, \]
where \(c:=\frac{\lambda_2}{\lambda_1}\). 
\end{lemma}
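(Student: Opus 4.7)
The plan is to reduce the statement to elementary algebra once the key identity $\sum_{i=1}^d \lambda_i^2 = n$ is extracted. Since $P$ is real, symmetric, idempotent, and has rank $n$, I would compute
\[
\sum_{i=1}^d \lambda_i^2 \,=\, \Tr(\abs{P}^2) \,=\, \sum_{i,j=1}^d P_{ij}^2 \,=\, \Tr(P^t P) \,=\, \Tr(P) \,=\, n,
\]
and analogously $\sum_{i=1}^d \lambda_i = \Tr(\abs{P}) = \Tr(P) = n$, using $P_{ii} = (P^2)_{ii} = \sum_j P_{ij}^2 \geq 0$. From the first identity one reads off $\lambda_1 \leq \sqrt{n}$ and $\lambda_2^2 \leq n - \lambda_1^2$, so the a priori estimate $c \leq \sqrt{n - \lambda_1^2}/\lambda_1$ holds.

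For the strict positivity $0 < c$, I would combine the two identities: since $\lambda_1 \leq \sqrt{n}$, one has $\sum_{i \geq 2} \lambda_i = n - \lambda_1 \geq n - \sqrt{n} > 0$ whenever $n \geq 2$ (and the case $n = 1$ is not interesting since $\Pi_1 = 1$ is already known). The ordering $\lambda_2 \geq \cdots \geq \lambda_d$ then forces $\lambda_2 > 0$, hence $c > 0$.

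The middle inequality $c < \sqrt{n}/\lambda_1 - \lambda_1/(2\sqrt{n})$ follows, via the a priori bound, from
\[
\sqrt{n - \lambda_1^2} \,<\, \frac{2n - \lambda_1^2}{2\sqrt{n}};
\]
both sides are positive, and squaring cancels $4n^2 - 4n \lambda_1^2$ from each side and leaves $0 < \lambda_1^4$. Finally, the rightmost inequality $\sqrt{n}/\lambda_1 - \lambda_1/(2\sqrt{n}) < 1$ rearranges, after clearing denominators, to the quadratic inequality $\lambda_1^2 + 2\sqrt{n}\,\lambda_1 - 2n > 0$ in $\lambda_1$, whose unique positive root is precisely $(\sqrt{3} - 1)\sqrt{n}$; the hypothesis on $\lambda_1$ is exactly what is needed.

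The only substantive point will be recognizing that the crude bound $c \leq \sqrt{n - \lambda_1^2}/\lambda_1$ coming from $\Tr(\abs{P}^2) = n$ is already sharp enough to yield the target upper bound on $c$ after a single squaring; in particular, nothing finer about the Perron-Frobenius structure of $\abs{P}$ is needed beyond the strict inequality $\lambda_1 > \lambda_2$, which is already built into the statement. Everything else is routine algebra.
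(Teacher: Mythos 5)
Your proof is correct and follows essentially the same route as the paper: extract $\Tr(\abs{P})=\Tr(\abs{P}^2)=n$ from idempotence and symmetry, deduce $\lambda_2>0$ from $\sum_{i\geq 2}\lambda_i=n-\lambda_1>0$, and bound $c$ via $\lambda_2^2\leq n-\lambda_1^2$ (the paper passes through $\sqrt{1-x}\leq 1-x/2$ where you square directly, and you make explicit the final quadratic in $\lambda_1$ whose positive root is $(\sqrt{3}-1)\sqrt{n}$, which the paper leaves implicit). Your remark that $n=1$ must be set aside for the strict positivity of $c$ is apt; the paper's own argument has the same implicit restriction.
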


\begin{proof}
Since \(\Tr(\abs{P})=\Tr(P)\) and \(\Tr(\abs{P}^2)=\Tr(P^2)\) we get
\begin{equation}\label{eq:opti}
\begin{cases}
&\sum\limits_{i=2}^d \lambda_i=n-\lambda_1 \\
&\sum\limits_{i=2}^d \lambda_i^2=n-\lambda_1^2.
\end{cases}
\end{equation}
Note that \(\lambda_2>0\). Indeed, if \(\lambda_2\leq 0\), then by \eqref{eq:opti} we obtain \(\lambda_1\geq n;\) hence, \(\lambda_1^2\geq n^2\), which is not possible. Thus, we have established that \(c>0\).
Now, we prove the second estimate. Using \eqref{eq:opti}, we estimate
\[\lambda_2^2\leq \sum\limits_{i=2}^d \lambda_i^2=n-\lambda_1^2.\]
Thus, 
\[\lambda_2\leq \sqrt{n-\lambda_1^2} \leq \sqrt{n}\bigl(1-\frac{\lambda_1^2}{2n}\bigr),\]
since \(\lambda_1\leq \sqrt{n}\). 
This completes the proof. 
\end{proof}

\subsection{Blow-up of a matrix \(S\in \mathcal{S}\)}
For every matrix \(S\in \mathcal{S}_d\) we may obtain a graph \(G_S\) as follows: \(G_S:=(\{1, \ldots, d\}, E_S)\) with \(\{i,j\}\subset E_S\) if and only if \(s_{ij}=-1\). Conversely, given a finite simple graph \(G=(\{1, \ldots, d\}, E)\) let \(S_G\in \mathcal{S}\) be the matrix uniquely determined by \(s_{ij}=-1\) if and only if \(\{i,j\} \subset E\).
Clearly, for every \(S\in \mathcal{S}_d\) we have \(S_{G_S}=S\). Let \((p_1, \ldots, p_d)\) be a tuple of positive integers. The  \textit{\((p_1, \ldots, p_d)\)-blow-up} of a finite simple graph \(G=(\{1, \ldots, d\}, E)\) is by definition the graph obtained from \(G\) be replacing each vertex \(i\) with \(p_i\) distinct copies of \(i\) 
and a copy of \(i\) is adjacent to a copy of \(j\) if and only if \(i\) and \(j\) are adjacent in \(G\). 

\begin{definition}\label{def:BlowUp}
Let \(d\geq 1\) be an integer, let \((p_1, \ldots, p_d)\) be a tuple of positive integers and let \(S\in \mathcal{S}_d\) be a matrix. Let \(G\) denote the \textit{\((p_1, \ldots, p_d)\)-blow-up} of \(G_S\). 
The matrix \(S_{G}\) is called \textit{\((p_1, \ldots, p_d)\)-blow-up} of \(S\). 
\end{definition}

If the matrix \(S^\prime\) is a blow-up of \(S\in \mathcal{S}\), then the non-zero eigenvalues of \(S^\prime\) and \(S\) coincide, cf. \cite[Lemma 2.2]{MR4001080}. 
 
\subsection{Polyhedral maximizer of \(\Pi_n\)} The following lemma is a simple consequence of \eqref{eq:ChalmersLewicki} and the fact that \(\Pi(\cdot)\) admits a polyhedral maximizer amongst \(n\)-dimensional Banach spaces.

\begin{lemma}\label{lem:MaxPoly}
Let \(n\geq 1\) be an integer. Then there exist an integer \(d\geq n\), a matrix \(S\in \mathcal{S}_d\) and a matrix \(D\in \mathcal{D}_d\) with positive diagonal entries such that \(\pi_n(\sqrt{D} S \sqrt{D})=\Pi(n,d)=\Pi_n\). 
\end{lemma}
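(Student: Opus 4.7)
The plan is to combine \eqref{eq:ChalmersLewicki} with the asserted existence of a polyhedral maximizer and then do some light bookkeeping to remove zero diagonal entries from $D$. First I would invoke the polyhedral-maximizer fact to fix an $n$-dimensional Banach space $E$ whose unit ball has only finitely many faces and which satisfies $\Pi(E)=\Pi_n$. Polyhedrality means the dual unit ball of $E^{*}$ has only finitely many extreme points; choosing one representative from each antipodal pair $\{\pm f\}$ of extreme functionals yields an isometric embedding $E\hookrightarrow \ell_\infty^d$ for some integer $d\geq n$.

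Next I would verify that $\Pi(E,\ell_\infty^d)=\Pi(E)=\Pi_n$. One direction is immediate: restricting any projection $\ell_\infty\to E$ to $\ell_\infty^d$ gives $\Pi(E,\ell_\infty^d)\leq \Pi(E,\ell_\infty)$. For the reverse inequality, compose any projection $\ell_\infty^d\to E$ with the norm-one coordinate projection $\ell_\infty\to\ell_\infty^d$. Consequently $\Pi(n,d)\geq \Pi(E,\ell_\infty^d)=\Pi_n$, and since $\Pi(n,d)\leq \Pi_n$ is trivial we obtain the identity $\Pi(n,d)=\Pi_n$. Now \eqref{eq:ChalmersLewicki} applied to this $d$ immediately provides matrices $S\in\mathcal{S}_d$ and $D\in\mathcal{D}_d$ with
\[
\pi_n\bigl(\sqrt{D}\,S\,\sqrt{D}\bigr)=\Pi(n,d)=\Pi_n.
\]

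The only remaining concern is that $D$ may have vanishing diagonal entries. If $d_i=0$ for some index $i$, then the $i$-th row and column of $\sqrt{D}\,S\,\sqrt{D}$ are identically zero, so deleting such indices affects neither the non-zero spectrum nor the value of $\pi_n$. Writing $S'$ and $D'$ for the principal submatrices obtained after removing all indices with $d_i=0$, we have $D'\in\mathcal{D}_{d'}$ with strictly positive diagonal entries (the trace is preserved since we only dropped zeros), and $\pi_n\bigl(\sqrt{D'}S'\sqrt{D'}\bigr)=\Pi_n$. Finiteness of this value forces the submatrix to have at least $n$ eigenvalues, hence $d'\geq n$, and another application of \eqref{eq:ChalmersLewicki} yields $\Pi(n,d')=\Pi_n$. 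The triple $(d',S',D')$ then satisfies the conclusion of the lemma.

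The only genuinely non-trivial input in this argument is the existence of a polyhedral maximizer of $\Pi(\cdot)$ among $n$-dimensional Banach spaces, which the statement explicitly treats as a known fact (it follows from a compactness-plus-upper-semicontinuity argument on Banach–Mazur space combined with the density of polyhedral norms); everything else is a direct rewriting of \eqref{eq:ChalmersLewicki} together with elementary manipulation of principal submatrices, so I do not anticipate any serious obstacle beyond tracking dimensions.
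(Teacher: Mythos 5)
Your proposal is correct and follows essentially the same route as the paper: both arguments rest on the existence of a polyhedral maximizer (equivalently, a finite \(d\) with \(\Pi(n,d)=\Pi_n\), which the paper takes from \cite[Theorem 1.4]{MR4001080}) and then read off \(S\) and \(D\) from \eqref{eq:ChalmersLewicki}. The only difference is cosmetic: where you delete the rows and columns indexed by vanishing diagonal entries of \(D\), the paper instead chooses \(d\) minimal with \(\Pi(n,d)=\Pi_n\) and notes that a zero entry would contradict \(\Pi(n,d)>\Pi(n,d-1)\) --- two phrasings of the same reduction.
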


\begin{proof}
Let \(d \geq n\) denote the smallest integer such that \(\Pi(n,d)=\Pi_n\). 
The existence of such an integer is guaranteed by \cite[Theorem 1.4]{MR4001080}. Let \(S\in \mathcal{S}_d\) and \(D\in \mathcal{D}_d\) be matrices such that \(\pi_n(\sqrt{D} S \sqrt{D})=\Pi(n,d)\). Clearly, if \(n\geq 2\), then \(d>n\) and \(\Pi(n,d)>\Pi(n, d-1)\). Hence, every diagonal entry of \(D\) is positive. 
\end{proof}

\section{A formula for \(\Pi_n\)}\label{seq:four}

We begin this section with the proof of Theorem \ref{thm:maximizer} from the introduction. 
\begin{proof}[Proof of Theorem \ref{thm:maximizer}]
Fix \(d\geq n\) and let \(S\in \mathcal{S}_d\) satisfy \(\pi_n(S)=\max\{ \pi_n(S^\prime) : S^\prime \in \mathcal{S}_d\}\). We know from Lemma \ref{lem:ComMax} that there exists a matrix \(P\in \mathcal{P}_{n,d}\) such that \(\pi_n(S)=\Tr(SP)\) and \(\Sgn(P)=S\). Thus, \(\abs{P}\) is positive and 
\[\pi_n(S)=\Tr(SP)= j^t \abs{P} j, \]
where \(j\in \R^d\) denotes the all-ones vector. Clearly,
\[j^t \abs{P} j \leq \norm{j}_2^2 \, \rho(\abs{P}),\]
so \(A \leq B \). 

Fix \(\varepsilon >0 \) and let \(P\in \mathcal{P}_n\) be a matrix such that \(\abs{P}\) is a positive and \(B \leq \rho(\abs{P})+\varepsilon\). Suppose that \(P\) is a \(d\times d\) matrix and let \(v\in \R^d\) with \(\norm{v}_2=1\) be the unique positive vector such that \(\abs{P} v= \rho(\abs{P}) v\).
We abbreviate \(D:=\textrm{diag}( \abs{v_1}^2, \ldots, \abs{v_d}^2)\). It holds 
\[\rho(\abs{P})=v^t \abs{P} v=\Tr( \sqrt{D} \Sgn(P) \sqrt{D}\, P) \leq \pi_n( \sqrt{D} \Sgn(P) \sqrt{D}),\] 
where the inequality is due to a result of Ky Fan, cf. \cite[Theorem 1]{MR34519} or \eqref{eq:KyFan}. 
By invoking a result of Chalmers and Lewicki, cf. \cite[Theorem 2.3]{MR2725896} or \eqref{eq:ChalmersLewicki}, we obtain
\[\pi_n( \sqrt{D} \Sgn(P) \sqrt{D}) \leq \Pi_n,\] 
so \(B \leq \Pi_n\). 

The inequality \(\Pi_n \leq A\) follows readily from \cite[Theorem 1.2]{MR4001080}. Hence, we have established that \(A=B=\Pi_n\). 

We are left to show that there exists a matrix \(P\in \mathcal{P}_n\) such that \(\abs{P}\) is positive and \(\rho(\abs{P})=B\). From Lemma \ref{lem:MaxPoly} we get an integer \(d\geq n\), a matrix \(S\in \mathcal{S}_d\) and a matrix \(D\in \mathcal{D}_d\) with positive diagonal entries such that \(\pi_n(\sqrt{D} S \sqrt{D})=\Pi(n,d)=\Pi_n\). Now, Lemma \ref{lem:ComMax} tells us that there exists a matrix \(P\in \mathcal{P}_{n, d}\) with \( \pi_n(\sqrt{D} S \sqrt{D})= \Tr(\sqrt{D} S \sqrt{D}\, P)\) and \(\Sgn(P)=S\). We estimate
\[\Pi_n=\sum_{i,j}^d \sqrt{d_i d_j} \abs{p_{ij}} =\Tr(\abs{P} w w^t)\leq  \rho(\abs{P}),\]
where \(w:=(\sqrt{d_1}, \ldots, \sqrt{d_d})\). Therefore, \(\rho(\abs{P})=B\), as desired. 
\end{proof}

To conclude this section, we prove Proposition \ref{prop:SupA}.

\begin{proof}[Proof of Proposition \ref{prop:SupA}]  To begin, we show that  \((1)\Longrightarrow (3)\) and \((1)\Longrightarrow (2)\): Let \(S\in \mathcal{S}\) be a \(d\times d\) matrix with \(\lambda_1(S)+\dotsm+\lambda_n(S)=d A\). Clearly,  there exists a \(d\times d\) matrix \(P\in \mathcal{P}_n\) such that \(PS=SP\) and \(\Tr(SP)=A\). Set \(E:=P(\ell_1^d)\subset \ell_1^d\). By the use of Lemma \ref{lem:RelProj}, we estimate
\[\Pi_n=A=\Tr(\tfrac{1}{d}SP)\leq \Pi(E, \ell_1^d). \]
Furthermore, we have
\[\Pi_n=\Tr(\tfrac{1}{d}SP)\leq \frac{1}{d} \sum_{i,j=1}^d \abs{p_{ij}} \leq \rho(\abs{P})\leq B,\]
since every entry of \(\abs{P}\) is positive due to the fact that \(\pi_n(\tfrac{1}{d}S)\) is maximal amongst \(\pi_n(\tfrac{1}{d}S^\prime)\), for \(S^\prime \in \mathcal{S}_d\),  and Lemma \ref{lem:ComMax}. Consequently,
\begin{equation}\label{eq:EigAllsOnes}
\Pi_n=\Pi(E, \ell_1^d)=\rho(\abs{P})=\frac{1}{d}\sum_{i,j=1}^d \abs{p_{ij}}.
\end{equation}
It is not hard to check that the all-ones vector \(j\in \R^d\) is an eigenvector of \(\abs{P}\) with eigenvalue \(\rho(\abs{P})=\Pi_n\) due to \eqref{eq:EigAllsOnes}. Thus, the right hand side of \eqref{eq:EigAllsOnes} equals \(\norm{P}\) and we get \(\norm{P}=\Pi(E, \ell_1^d)=\Pi_n\). Moreover, since \(S\) and \(P\) commute, it follows that \(E\) considered as a subspace of \(\ell_\infty^d\) satisfies \(\Tr(\tfrac{1}{d}S P)\leq \Pi(E, \ell_\infty^d)\), and thus in this case \(\norm{P}=\Pi(E)=\Pi_n\), as desired. 

Next, we show that  \((3)\Longrightarrow (1)\). Suppose that \(E\subset \ell_1^d\) is a linear subspace such that the orthogonal projection \(P\colon \ell_1^d\to E\) is minimal and \(\Pi(E, \ell_1^d)=\norm{P}=\Pi_n\). By Lemma \ref{lem:RelProj} there exists a matrix \(A_0\) such that \(A_0P=PA_0P\), \(\nu_1(A_0)=1\) and \(\Tr(A_0P)=\norm{P}\). Since \(\nu_1(A_0)=1\), the matrix \(A_0\) may be written as a product \(DS\), with \(S\in \widehat{\mathcal{S}}_d\) and \(D\in \mathcal{D}_d\). Let \(\lambda_1, \cdots ,\lambda_k\) denote the non-zero eigenvalues of \(DSP\). Clearly, \(k\leq n\) and
\[\Pi_n=\Tr(DSP)=\lambda_1+\dotsm+\lambda_k\leq \pi_k(DS) \leq \Pi_k,\]
where the last inequality is due to Lemma \ref{lem:mats}; thus, \(k=n\). In particular, the operator \(DSP\) is invertible on \(E\). Let \(I\subset \{1, \ldots, d\}\) be the set of all indices \(i\) such that \(d_i>0\). Without loss of generality, we may suppose that \(I=\{1, \ldots, m\}\) for some \(m\in \{1, \ldots, d\}\). Since \(DSP\) is invertible on \(E\) and \(DS P=PDSP\), we infer that 
\begin{equation}\label{eq:zeros}
p_{ij}=p_{ji}=0 \quad \quad \, \textrm{ for all \(1\leq i \leq d\) and \(m\leq j \leq d\).} 
\end{equation}
 We compute,
\begin{equation}\label{eq:minimality4}
\norm{P}=\Tr(DSP)=\sum_{i,j=1}^d d_i s_{ij} p_{ji}\leq \sum_{i,j=1}^m d_i \abs{p_{ji}}\leq \norm{P}. 
\end{equation}
From \eqref{eq:minimality4} we get 
\[s_{ij}=\textrm{sgn}(p_{ij}) \quad \quad \textrm{ if } p_{ij}\neq 0.\]
We define the matrix \(\widehat{S}\) as follows
\[
\widehat{s}_{ij}:=
\begin{cases}
\widehat{s}_{ij}=\textrm{sgn}(p_{ij}) & \textrm{ if } p_{ij}\neq 0 \\
1 & \textrm{ otherwise}. 
\end{cases}
\] Furthermore, let \(\widehat{S}_0\) (\(P_0\) respectively) denote the principal submatrix obtained from \(\widehat{S}\) (\(P\) respectively) be keeping its first \(m\) rows and columns. Note that \(P_0\) is an orthogonal projection matrix of rank \(n\). By virtue of \eqref{eq:zeros}, \eqref{eq:minimality4} and a result of Fan, see \eqref{eq:KyFan}, we obtain
\[\Pi_n=\Tr(DSP)=\Tr(\tfrac{1}{m} \widehat{S}_0 P_0) \leq \pi_n(\tfrac{1}{m} \widehat{S}_0),\]
so, \(\pi_n(\tfrac{1}{m} \widehat{S}_0)=A\), as was to be shown. 

We are left to establish \((2) \Longrightarrow (1)\). To this end, let \(E\subset \ell_\infty^d\) be a linear subspace such that the orthogonal projection \(P\colon \ell_\infty^d\to E\) is minimal and \(\Pi(E)=\norm{P}=\Pi_n\). By Lemma \ref{lem:RelProj} there exists a matrix \(A_0\) such that \(A_0P=PA_0P\), \(\nu_1(A_0)=1\) and \(\Tr(A_0P)=\norm{P}\). Since \(\nu_1(A_0)=1\), the matrix \(A_0\) may be written as a product \(SD\) with \(S\in \widehat{\mathcal{S}}_d\) and \(D\in \mathcal{D}_d\). As before, it is possible to show that the operator \(SDP\) has \(n\) non-zero eigenvalues. Let \(I\subset \{1, \ldots, d\}\) be the set of all indices \(i\) such that \(d_i>0\). Without loss of generality, we may suppose that \(I=\{1, \ldots, m\}\) for some \(m\in \{1, \ldots, d\}\). The matrices \(SD\) and \(\sqrt{D} S \sqrt{D}\) have the same eigenvalues. We set \(S^{sym}:=\tfrac{1}{2}\left(S+S^t\right)
\). With the help of a result due to Fan, cf. \cite[Theorem 2]{MR33981}, we estimate
\[\Pi_n \leq \pi_n(SD)=\pi_n(\sqrt{D}S \sqrt{D})\leq \pi_n(\sqrt{D}S^{sym}\sqrt{D}) \leq \Pi_n.\]
Let \(S_0\) (\(S^{sym}_0\) respectively) denote the principal submatrix obtained from \(S\) (\(S^{sym}\) respectively) be keeping its first \(m\) rows and columns.
By virtue of Lemma \ref{lem:ComMax}, the absolute value of each entry of \(S^{sym}_0\) is equal to one. 
Therefore, \(S_0=S^{sym}_0\). Set \(\Lambda:=\textrm{diag}(d_1, \ldots, d_m, 1, \ldots, 1)\). Clearly, \(v\in \R^d\) is an eigenvector of \(\sqrt{D} S \sqrt{D}\) if and only if \(\sqrt{\Lambda^{-1}} v\) is an eigenvector of \(SD\). Therefore, there exist an eigenbasis of \(SD\) that is orthonormal with respect to the scalar product \(\langle x, y \rangle_{\Lambda}= x^t \Lambda y\). Now, since \(A_0P=PA_0P\) and the operator \(A_0P\) has \(n\) non-zero eigenvalues, the column space of \(P\) is contained in the column space of \(SD\) and therefore
\[\langle p_i, e_j \rangle_{\Lambda}=0 \quad \quad \, \textrm{ for all \(1\leq i \leq d\,\) and \(\,m\leq j \leq d\),}  \]
where \(p_i\in \R^d\) is the \(i\)-th column vector of \(P\). Thus, we obtain \eqref{eq:zeros} and the proof we used in \((3)\Longrightarrow (1)\) with the necessary changes being made now applies. This completes the proof. 
\end{proof}

\section{Subspaces of \(\ell_1^d\) with almost minimal orthogonal projections}\label{sec:AOP}

The subsequent proposition is the key ingredient in the proof of Theorem \ref{thm:main1}.

\begin{proposition}\label{prop:approxbyD}
Let \(n \geq 1\) be an integer. For every \(\varepsilon > 0\) there exist an integer \(d\geq n\) and a matrix \(P\in \mathcal{P}_{n,d}\) such that \(P\) and \(\Sgn(P)\) commute, the matrix \(\abs{P}\) is positive, \(\Pi_n\leq \rho(\abs{P})+\varepsilon\), and
\[j^t \abs{P} j \leq d\,\rho(\abs{P})\leq j^t \abs{P} j+\varepsilon,\]
where \(j\in\R^d\) denotes the all-ones vector. 
\end{proposition}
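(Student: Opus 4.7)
The plan is to apply a blow-up construction (Definition \ref{def:BlowUp}) to a polyhedral maximizer supplied by \lemref{lem:MaxPoly}, with the blow-up multiplicities chosen via Dirichlet's theorem on simultaneous Diophantine approximation so that the Perron--Frobenius eigenvector of $\abs{P}$ is almost the all-ones vector in the blown-up dimension.

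First, \lemref{lem:MaxPoly} furnishes integers $d_0 \geq n$, matrices $S \in \mathcal{S}_{d_0}$ and $D \in \mathcal{D}_{d_0}$ with strictly positive diagonal entries $(d_1, \ldots, d_{d_0})$ such that $\pi_n(\sqrt{D} S \sqrt{D}) = \Pi_n$. Given $\delta>0$ (to be fixed later), Dirichlet's theorem on simultaneous Diophantine approximation produces positive integers $p_1, \ldots, p_{d_0}$ summing to $N$ with $\abs{d_i - p_i/N} \leq \delta$ for every $i$, and with $N\delta$ simultaneously arbitrarily small. Setting $D'' := \mathrm{diag}(p_1/N, \ldots, p_{d_0}/N) \in \mathcal{D}_{d_0}$, I choose a matrix $\tilde{S} \in \widehat{\mathcal{S}}_{d_0}$ maximising $\pi_n(\sqrt{D''}\,\cdot\,\sqrt{D''})$; \lemref{lem:ComMax} (together with \lemref{lem:EqCase}) then yields $\tilde{P} \in \mathcal{P}_{n, d_0}$ with $\tilde{S} \in \mathcal{S}_{d_0}$, $\Sgn(\tilde{P}) = \tilde{S}$, $\abs{\tilde{P}}$ positive, and $\tilde{P}$ commuting with $\sqrt{D''} \tilde{S} \sqrt{D''}$.

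Next I perform the blow-up. Let $\tilde{S}' \in \mathcal{S}_{d'}$ ($d' := N$) be the $(p_1, \ldots, p_{d_0})$-blow-up of $\tilde{S}$, and let $U \colon \R^{d_0} \to \R^{d'}$ be the isometry defined by $U e_i = p_i^{-1/2}\mathbf{1}_i$, with $\mathbf{1}_i \in \R^{d'}$ the indicator of the copies of $i$. A direct calculation gives $\tilde{S}' = U \sqrt{D'}\, \tilde{S}\, \sqrt{D'}\, U^t$ where $D' := N D''$. I then set
\[ P := U \tilde{P} U^t \in \mathcal{P}_{n, d'}. \]
Since $U^t U = I_{d_0}$, $P$ is an orthogonal projection of rank $n$, and the entry formula $\abs{P}_{k\ell} = \abs{\tilde{P}_{i(k), i(\ell)}}/\sqrt{p_{i(k)} p_{i(\ell)}}$ (where $i(k)$ denotes the block containing $k$) shows simultaneously that $\abs{P}$ is positive and $\Sgn(P) = \tilde{S}'$. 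Moreover, the commutation of $\tilde{P}$ with $\sqrt{D''} \tilde{S} \sqrt{D''}$ transfers directly to commutation of $P$ with $\tilde{S}' = \Sgn(P)$.

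Finally I derive the quantitative estimates. Since $\abs{P} = U \abs{\tilde{P}} U^t$ with $U^tU = I_{d_0}$, the non-zero eigenvalues of $\abs{P}$ coincide with those of $\abs{\tilde{P}}$, so $\rho(\abs{P}) = \rho(\abs{\tilde{P}})$; counting pairs yields $j^t \abs{P} j = N (w'')^t \abs{\tilde{P}} w''$, where $w'' := (\sqrt{p_i/N})_i$ is a unit vector. Reasoning as in the proof of \thmref{thm:maximizer} shows $(w'')^t \abs{\tilde{P}} w'' = \pi_n(\sqrt{D''} \tilde{S} \sqrt{D''})$, and combining with $\rho(\abs{\tilde{P}}) \leq B = \Pi_n$, with $\pi_n(\sqrt{D''} \tilde{S} \sqrt{D''}) \geq \pi_n(\sqrt{D''} S \sqrt{D''})$, and with the Lipschitz continuity of $D \mapsto \pi_n(\sqrt{D} S \sqrt{D})$ at the point $D$ (Lipschitz constant $C = C(S,D)$, finite because $D$ has strictly positive diagonal) produces
\[ \Pi_n - \rho(\abs{P}) \leq C\delta, \qquad d' \rho(\abs{P}) - j^t \abs{P} j \;=\; N\bigl(\rho(\abs{\tilde{P}}) - (w'')^t \abs{\tilde{P}} w''\bigr) \leq C N\delta. \]
Choosing $\delta$ small and the Dirichlet approximation so that both $C\delta$ and $CN\delta$ are at most $\varepsilon$ closes the argument. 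The main technical obstacle is calibrating Dirichlet's theorem so as to control $\delta$ and $N\delta$ simultaneously (while keeping each $p_i \geq 1$), but this is exactly what the quantitative form of the theorem provides.
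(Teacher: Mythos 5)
Your proof is correct, and it follows the same overall strategy as the paper: produce a maximizer with strictly positive rational weights via Dirichlet's theorem, blow up accordingly so that the Perron vector of $\abs{P}$ becomes nearly constant, and invoke Lemmas \ref{lem:EqCase} and \ref{lem:ComMax} for the sign-pattern, positivity, and commutation properties. The one genuine difference is how the final matrix $P$ is obtained. The paper passes to the blown-up dimension $d$ and \emph{re-maximizes} $\pi_n$ over all of $\mathcal{S}_d$, taking for $P$ a Ky Fan maximizer of that new problem; the blow-up matrix $S$ of $\Sgn(P_0)$ is used only to certify the lower bound $\pi_n(S_\star)\geq\pi_n(S)$, and Lemma \ref{lem:ComMax} is applied in dimension $d$ to get positivity and the commutation. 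You instead keep the maximizer $\tilde P$ in the original small dimension $d_0$ (for the rationalized weights $D''$) and transport it explicitly via the isometry $U$, i.e.\ $P:=U\tilde P U^t$; positivity of $\abs{P}$, the identity $\Sgn(P)=\tilde S'$, the commutation $P\tilde S'=\tilde S'P$, and the relations $\rho(\abs{P})=\rho(\abs{\tilde P})$ and $j^t\abs{P}j=N(w'')^t\abs{\tilde P}w''$ all become direct computations. This buys you a more concrete $P$ and sidesteps the chain of inequalities \eqref{eq:UppEsT}--\eqref{eq:LowEsT}, at the cost of having to verify the conjugation identities and a Lipschitz estimate for $D\mapsto\pi_n(\sqrt{D}S\sqrt{D})$ (which replaces the paper's explicit bound $\rho(\abs{P_0})-\sqrt{q}^t\abs{P_0}\sqrt{q}\leq 2\Pi_n\norm{v-\sqrt{q}}_2$). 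Two small points worth making explicit if you write this up: Lemma \ref{lem:ComMax} requires $d_0>n$, so the case $n=1$ (where $d_0=1$ is possible) should be handled separately, though it is trivial; and you should record that $j^t\abs{P}j\leq d'\rho(\abs{P})$ simply because $j/\sqrt{d'}$ is a unit vector and $\abs{P}$ is symmetric.
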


\begin{proof}
By Theorem \ref{thm:maximizer}, there exists a matrix  \(P_0\in \mathcal{P}_n\) such that \(\abs{P_0}\) has only positive entries and \(\rho(\abs{P_0})=\Pi_n\).  
Suppose that \(P_0\) is an \(m\times m\) matrix and let \(v\in \R^{m}\) with \(\norm{v}_2=1\) denote the unique positive vector such that \(\abs{P_0} v= \rho(\abs{P_0}) v\). The existence of such a vector is guaranteed by the Perron-Frobenius theorem. We write \(v=(\sqrt{d_1}, \ldots, \sqrt{d_{m}})\), where \(d_i>0\) for all \(1 \leq i \leq m\). Note that \(d_1+\dotsm d_m=1\) and \(\varepsilon_0:=\min\{ d_i : 1\leq i \leq m\} >0\). Fix an integer 
\begin{equation}\label{eq:kLargeenough}
k > \frac{4(m-1)\Pi_n}{\varepsilon \varepsilon_0}.
\end{equation}
By Dirichlet's Theorem on simultaneous approximation \cite[Theorem 1A p. 27]{MR568710}, there exist integers \(p_1, \ldots, p_{m-1}, d\) such that
\[1\leq d < k^{m-1} \quad \textrm{ and } \quad d\, \lvert d_i-\frac{p_i}{d}\rvert \leq \frac{1}{k} \quad \quad  (1\leq i \leq m-1).\] 
We set \(q_i:=\frac{p_i}{d}\) for all \(1 \leq i \leq m-1\) and \(q_m:=\frac{p_d}{d}:=1-(q_1+\dotsm+q_{m-1})\).
By construction, \(q_1+\dotsm+q_m=1\) and 
\begin{equation}\label{eq:fund}
d \sum_{i=1}^m\abs{d_i-q_i} \leq 2(m-1) \frac{1}{k}. 
\end{equation}
It follows from \eqref{eq:kLargeenough} and \eqref{eq:fund} that \(q_i>0\) for all integers \(1\leq i \leq m\). Let the \(d\times d\) matrix \(S\in \mathcal{S}\) denote the \((p_1, \ldots, p_m)\)-blow up of the \(m\times m\) matrix \(\textrm{Sgn}(P_0)\), see Definition \ref{def:BlowUp}, and suppose \(S_{\star}\in \mathcal{S}_d\) is a matrix such that \(\pi_n(S_\star)=\max\big\{ \pi_n(S^\prime) : S^\prime\in \mathcal{S}_d \, \big\}\).

Now, we are ready to define the matrix \(P\): By Lemmas \ref{lem:EqCase} and \ref{lem:ComMax}, there exists a matrix \(P\in \mathcal{P}_n\) such that \(\pi_n(S_\star)=\Tr(S_\star P)\), \(\Sgn(P)=S_\star\), and the matrices \(P\) and \(\Sgn(P)\) commute.
 We obtain
\begin{equation}\label{eq:UppEsT}
j\abs{P} j^t=\Tr(\Sgn(P) P)=\pi_n(S_\star)\geq \pi_n(S)=d\,\pi_n(\textrm{Sgn}(P_0) \Lambda ),
\end{equation}
where \(\Lambda:=\textrm{diag}(q_1, \ldots, q_m)\) and for the last equality we have used \cite[Lemma 2.2]{MR4001080}. Note that
\begin{equation}\label{eq:LowEsT}
d\,\pi_n(\textrm{Sgn}(P_0) \Lambda )=d\,\pi_n(\sqrt{\Lambda}\, \textrm{Sgn}(P_0) \sqrt{\Lambda}) \geq d\, \sqrt{q}^t \abs{P_0} \sqrt{q},
\end{equation}
where \(\sqrt{q}:=(\sqrt{q_1}, \ldots, \sqrt{q_m})\). The equality is a consequence of the identity \[\sqrt{\Lambda}\, \textrm{Sgn}(P_0) \Lambda \, \sqrt{\Lambda}^{-1}=\sqrt{\Lambda}\, \textrm{Sgn}(P_0) \sqrt{\Lambda}\] and the inequality is due to a theorem of K. Fan, cf. \cite[Theorem 1]{MR34519} or \eqref{eq:KyFan}. 
Thus, by \eqref{eq:UppEsT} and \eqref{eq:LowEsT}
 \[j\abs{P} j^t \geq d\, \sqrt{q}^t \abs{P_0} \sqrt{q}\] and we may compute
\begin{equation}\label{eq:first}
\begin{split}
d\, \rho(\abs{P})-j\abs{P} j^t &\leq d\left(\rho(\abs{P_0})-\sqrt{q}^t \abs{P_0} \sqrt{q}\right)\leq 2  \Pi_n \, d\,\norm{ v- \sqrt{q}}_2.
\end{split}
\end{equation}
By \eqref{eq:fund} and \eqref{eq:kLargeenough}, we get
\begin{equation}\label{eq:second}
 2\Pi_n\, d \,\norm{ v- \sqrt{q}}_2 \leq \frac{2 \Pi_n}{\sqrt{2\varepsilon_0}} \,d \sum_{i=1}^m\abs{d_i-q_i} \leq \frac{4(m-1) \Pi_n}{\sqrt{2\varepsilon_0}} \frac{1}{k} \leq \varepsilon.
\end{equation}
To obtain the first inequality, we have used that the numbers \(q_i, d_i\) all lie in the interval \([\frac{\varepsilon_0}{2}, 1]\).
From \eqref{eq:first} and \eqref{eq:second} we conclude
\[d\,\rho(\abs{P})\leq j^t \abs{P} j+\varepsilon,\]
as was to be shown. 
\end{proof}

Now, we have everything at hand to prove our main result.

\begin{proof}[Proof of Theorem \ref{thm:main1}]
We set
\begin{equation}\label{eq:EtaDef}
 \eta:=\frac{1}{\sqrt{n}} \min\bigl\{1, \left(\frac{\varepsilon}{32}\right)^2\bigr\}.
\end{equation}
From Proposition \ref{prop:approxbyD}, we get an integer \(d\geq n\) and a matrix \(P\in \mathcal{P}_{n,d}\) such that  \(P\) and \(\Sgn(P)\) commute, the matrix \(\abs{P}\) is positive, \(\Pi_n \leq \rho(\abs{P})+\eta\) and 
\begin{equation}\label{eq:DscaledEstimate}
d\,\rho(\abs{P})\leq j^t \abs{P} j+\eta,
\end{equation}
where \(j\in \R^d\) denotes the all-ones vector. Let \(v_1, \ldots, v_d\in \R^d\) with \(\norm{v_i}_2=\sqrt{d}\) be an orthogonal eigenbasis of \(\abs{P}\) such that \(\abs{P} v_i=\lambda_i \,v_i\) for all \(1\leq i \leq d\), where the eigenvalues \(\lambda_i:=\lambda_i(\abs{P})\) are ordered such that \(\lambda_1\geq \ldots \geq \lambda_d\). It holds
\begin{equation}\label{eq:esti2}
 j^t\abs{P} j= d \, \sum_{i=1}^d \Bigl(\frac{\abs{\langle j, v_i\rangle}}{d}\Bigr)^2 \lambda_i.
\end{equation}
We set \(\alpha_i:= \bigl(\frac{\abs{\langle j, v_i\rangle}}{d}\bigr)^2\), for \(1 \leq i \leq d\), and 
\[\alpha:=\sum_{i=2}^d \alpha_i.\]
Note that \(\alpha\in [0,1]\). In what follows, we show \(\alpha\to 0\) for \(\eta \to 0\).  By virtue of \eqref{eq:DscaledEstimate} and \eqref{eq:esti2},  we obtain
\[\lambda_1\leq (1-\alpha)\lambda_1+\alpha \, c \, \lambda_1+\frac{\eta}{d},\]
where \(c:=\tfrac{\lambda_2}{\lambda_1}\) and by the Perron–Frobenius theorem \(\rho(\abs{P})=\lambda_1\). Hence,
\begin{equation}\label{eq:makeAlphasmall}
\alpha (1-c)\lambda_1 \leq \frac{\eta}{d}.
\end{equation}
By results of Gr\"unbaum, cf. \cite{MR114110}, and Rutovitz, cf. \cite{MR190708},  
\[\Pi_n\geq \Pi(\ell_2^n) > \sqrt{\frac{2}{\pi}}\sqrt{n}.\]
Therefore, \(\lambda_1 \geq  \sqrt{\frac{2}{\pi}}\sqrt{n}-\eta\) and from the definition of \(\eta\) we infer  \( \lambda_1 \geq \tfrac{3}{4}\sqrt{n}> (\sqrt{3}-1)\sqrt{n}\). Hence, by the use of Lemma \ref{lem:upperC}, we get
\[c<\frac{23}{24}<1\]
and by invoking \eqref{eq:makeAlphasmall}, we may deduce that 
\[\alpha \leq \frac{c_n \eta}{d}, \quad \quad \quad\textrm{ where  }\,  \, c_n:=\frac{32}{\sqrt{n}}.\]
As a consequence, 
\[1-\frac{c_n \eta}{d} \leq \Bigl(\frac{\abs{\langle j, v_1\rangle}}{d}\Bigr)^2 \leq  1\] 
 and by the parallelogram law, 
\[\norm{j-v_1}_2^2=2\left(d- \langle j, v_1 \rangle\right)\leq 2d\Bigl(1- \Bigl(\frac{\abs{\langle j, v_1\rangle}}{d}\Bigr)^2\Bigr)\leq 2 c_n \eta.\]
Thus, \(\vert v_1^{(i)}-v_1^{(j)}\vert\leq \sqrt{8 c_n \eta} ,\) where \(v_1=(v_1^{(1)}, \ldots, v_1^{(d)})\). 
For all integers \(1\leq r,s\leq d\), we estimate
\begin{equation}\label{eq:difference}
\begin{split}
\sum_{i=1}^d \abs{p_{ri}}&\leq \Pi_n v_1^{(r)} +\sqrt{2 c_n\eta }\sqrt{\sum\nolimits_{i=1}^d \abs{p_{ri}}^2} \\
&\leq \Pi_n v_1^{(s)}+\sqrt{8 c_n \eta} \, \Pi_n +\sqrt{2 c_n \eta } \sqrt{n} \\
&\leq \sum_{i=1}^d \abs{p_{si}}+\sqrt{32 c_n n} \sqrt{\eta}. 
\end{split}
\end{equation}
This implies 
\[\norm{P}-\varepsilon \leq \frac{1}{d} \sum_{i,j=1}^d \abs{p_{ij}}.\]
We set \(E:=P(\ell_1^d)\subset \ell_1^d\). Since \(\Sgn(P)\) and \(P\) commute, Lemma \ref{lem:RelProj}
and the estimate above yield \(\norm{P}-\varepsilon \leq \Pi(E, \ell_1^d).\)
Furthermore, \(\Pi_n \leq \rho(\abs{P})+\eta \leq \norm{P}+\varepsilon\), as desired.
\end{proof}

Now we are in a position to prove Corollary \ref{cor:lower}. 

\begin{proof}[Proof of Corollary \ref{cor:lower}]
As pointed out in the introduction, 
\[\sup\bigl\{ r(\abs{P}) : P\in \mathcal{P}_n \textrm{ and } \abs{P} \textrm{ is positive} \bigr\} \leq \Pi_n.\]
Fix \(\varepsilon >0\) and define \(\eta\) as in \eqref{eq:EtaDef}. The proof of Theorem \ref{thm:main1} shows that there exists a \(d\times d\) matrix \(P\in \mathcal{P}_n\) such that estimate \eqref{eq:difference} holds and \(\Pi_n \leq \rho(\abs{P})+\eta\leq \rho(\abs{P})+\varepsilon \). Write 
\[R(\abs{P}):=\max_{i=1, \ldots, d} \, \sum_{j=1}^d \abs{p_{ij}}.\] 
By the Perron-Frobenius theorem, \(\rho(\abs{P})\leq R(\abs{P})\), and due to \eqref{eq:difference}, 
\[R(\abs{P})-r(\abs{P}) \leq \varepsilon;\]
thus, \(\Pi_n \leq R(\abs{P})+\varepsilon \leq r(\abs{P})+2 \varepsilon.\) This completes the proof. 
\end{proof}

\subsection{Acknowledgements} I am indebted to the anonymous referee for valuable suggestions. Moreover, I am grateful to Anna Bot for proofreading this paper.


\bibliographystyle{plain}
\bibliography{refs}
\noindent

\bigskip\noindent
Giuliano Basso ({\tt giuliano.basso@math.ethz.ch}),\\
Department of Mathematics, ETH Zurich, 8092 Zurich, Switzerland

\end{document}